\newtheorem{theorem}{Theorem}
\newtheorem{proposition}[theorem]{Proposition}
\newtheorem{lemma}[theorem]{Lemma}
\newtheorem{corollary}[theorem]{Corollary}
\newtheorem{problem}[theorem]{Problem}
\newtheorem{remark}[theorem]{Remark}
\newtheorem{example}[theorem]{Example}
\newtheorem{conjecture}[theorem]{Conjecture}
\newcommand{\RR}{\mathbb{R}}
\newcommand{\PP}{\mathbb{P}}
  \date{}
\title{\textbf{Algebraic Identifiability of Gaussian Mixtures}}
\author{Carlos Am\'endola, Kristian Ranestad and Bernd Sturmfels}
\begin{document}

\maketitle

\begin{abstract} \noindent
We prove that all moment varieties of univariate Gaussian mixtures have the expected dimension. 
Our approach rests on intersection theory and
Terracini's classification of defective surfaces.
The analogous identifiability result  is shown to be false for mixtures of Gaussians in dimension three and higher.
Their moments up to third order define projective varieties that are defective.
Our geometric study suggests an extension of
the Alexander-Hirschowitz Theorem for Veronese varieties
to the Gaussian setting.
\end{abstract}

\section{Introduction}

The Gaussian moment variety $\mathcal{G}_{n,d} $ is a subvariety of $\PP^N$, where $N = \binom{n+d}{d}-1$. 
Following  \cite{CB}, its points are the vectors of all
moments of order $\leq d$ of an  $n$-dimensional Gaussian distribution, parametrized birationally
 by the entries of the mean  vector $\mu = (\mu_1,\ldots,\mu_n)$
and  the covariance matrix $\,\Sigma = (\sigma_{ij})$.
The variety  $ \mathcal{G}_{n,d}$ is rational of dimension $n(n+3)/2$ for $d \geq 2$.    
 Its $k$th secant variety ${\rm Sec}_k (\mathcal{G}_{n,d})$
is the Zariski closure in $\PP^N$ of the set of vectors
of moments of order $\leq d$ of any probability distribution on $\RR^n$
that is the mixture of $k$ Gaussians, for $k \geq 2$.
Our aim is to determine the dimension 
of  the secant variety ${\rm Sec}_k (\mathcal{G}_{n,d})$.

That dimension is always bounded above by the number of parameters, so we have
\begin{equation} \label{eq:secdim}
 \dim \bigl( {\rm Sec}_k (\mathcal{G}_{n,d}) \bigr)
\,\,\leq \, \, \min \left\lbrace \,N\,,\,\, \, kn(n+3)/2  \,+\, k-1 \,\right\rbrace. 
\end{equation}
The right hand side is the \textit{expected dimension}. 
If equality holds in (\ref{eq:secdim}), then 
${\rm Sec}_k \big(\mathcal{G}_{n,d}$) is {\em nondefective}.
If this holds, and $N \geq \frac{1}{2}kn(n+3)  + k-1 $, then
the Gaussian mixtures are  algebraically identifiable
from their $N$ moments of order $\leq d$.
Here {\em algebraically identifiable}
means that the map from the model parameters to the moments is generically finite-to-one.
This means parameters can  be recovered by solving a 
zero-dimensional system of polynomial equations.
The term {\em rationally identifiable} is used if the map is generically one-to-one.

We focus our attention on algebraic identifiability.
In this paper we do not  study rational identifiability.
We prove the following result
that contrasts the cases $n=1$ and $n \geq 3$.

\begin{theorem} \label{thm:main}
Equality holds in (\ref{eq:secdim}) for $n=1$ and all values of $d$ and $k$.
Hence all moment varieties of mixtures of univariate Gaussians are algebraically
identifiable.
The same is false for  $n \geq 3$, $d=3$ and $k=2$: here the right hand side of (\ref{eq:secdim})
exceeds the left hand side by two.
\end{theorem}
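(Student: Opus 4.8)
The plan is to treat both halves of the theorem through Terracini's lemma, which identifies $\dim {\rm Sec}_k(\mathcal{G}_{n,d})$ with the dimension of the linear span of the affine tangent spaces at $k$ general points of the cone over $\mathcal{G}_{n,d}$. A convenient model for these tangent spaces comes from the truncated moment generating function: the point of $\mathcal{G}_{n,d}$ with mean $\mu$ and covariance $\Sigma$ records, degree by degree, the truncation to order $d$ of $\exp(\mu^\top x + \tfrac12 x^\top \Sigma x)$, so that tangent vectors are obtained by differentiating in $\mu$ (multiplying the truncated exponential by a linear form $\ell'$) and in $\Sigma$ (multiplying by a quadratic form $q'$). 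I would set up this dictionary first, since it converts the tangent-space spans into statements about ideals generated by linear and quadratic forms.

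For $n=1$ the variety $\mathcal{G}_{1,d}$ is a rational surface in $\PP^d$, and the expected secant dimension is $\min\{d,\,3k-1\}$. This surface contains the rational normal curve $C_d$ as its locus $\sigma=0$ and is swept out by deforming $\sigma$. The strategy is to invoke Terracini's classification of surfaces with a defective higher secant variety and to show that $\mathcal{G}_{1,d}$ appears nowhere on that list, for any $k$. Concretely I would verify that $\mathcal{G}_{1,d}$ is not a cone, by checking that the general tangent planes share no common point, and then compute, via intersection theory on this rational surface, the numerical invariants (degree and sectional genus) that separate it from the Veronese surface and from the scrolls and cones occurring in Terracini's list. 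I expect this to be the main obstacle: the positive result for $n=1$ is genuinely a classification-avoidance argument, and the invariants must be controlled uniformly in $d$ and $k$.

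For the negative statement with $n\geq 3$, $d=3$, $k=2$, I would compute the intersection of the two general tangent spaces directly in the exponential model. Writing $\ell_i=\mu_i^\top x$ and $q_i=x^\top\Sigma_i x$ for the two points and $\delta=\ell_1-\ell_2$, $\Delta q=q_1-q_2$, a common tangent vector with leading coefficient $c$ reduces, after matching the degree-one, degree-two and degree-three pieces, to a single cubic relation $\ell'(\delta^2+\Delta q)+\delta\,q' = -c\,\delta(\tfrac13\delta^2+\Delta q)$ in an unknown linear form $\ell'$ and quadratic form $q'$. The crux is a divisibility argument: since $\delta$ is a linear form and the polynomial ring is a UFD, solvability forces $\delta\mid\ell'\,\Delta q$, and for general $\Sigma_1,\Sigma_2$ one has $\delta\nmid\Delta q$, whence $\ell'\in\langle\delta\rangle$. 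The solution space is then parametrized by the pair $(c,\lambda)$ with $\ell'=\lambda\delta$, hence exactly two-dimensional, so the two tangent spaces meet in a plane rather than in a line and the secant defect is precisely $2$.

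The final point to check is why this produces a genuine defect only for $n\geq 3$. The same divisibility computation already yields a two-dimensional intersection for $n=2$, but there $N=9$ and the span of the two tangent spaces still fills $\CC^{10}$, so ${\rm Sec}_2(\mathcal{G}_{2,3})=\PP^9$ meets its expected dimension and no defect is visible; for $n=1$ the argument collapses, since in one variable $\delta\mid\Delta q$ always holds, consistent with the non-defectivity proved above. For $n\geq 3$ one has $N\geq n(n+3)+1$, so the two-dimensional over-intersection genuinely lowers $\dim{\rm Sec}_2(\mathcal{G}_{n,3})$ to $n(n+3)-1$, which is two below the expected value $n(n+3)+1$, completing the proof.
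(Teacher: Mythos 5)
Your plan for the univariate half names the right tool (Terracini's classification, Theorem \ref{thm:terra}) but does not actually carry out the step that constitutes almost all of the work. Case (2) of that classification is not the assertion that $X$ \emph{is} a cone or a scroll; it is the assertion that $X$ is \emph{contained in} a cone over a curve with apex a linear space of dimension $\leq k-2$. This is an extrinsic containment condition, weakening as $k$ grows, and it cannot be excluded by computing intrinsic numerical invariants such as degree and sectional genus, nor by checking that $\mathcal{G}_{1,d}$ is not itself a cone. (Case (1) is the easy one: the paper rules it out because $\mathcal{G}_{1,d}$ is singular along a line by Lemma \ref{lem:Gsing}, whereas a quadratic Veronese re-embedding of a rational normal surface is smooth or has one singular point and contains no line.) The paper's treatment of case (2) is the core of Section 2: Lemma \ref{decomp} converts it into a decomposition $H_d=A+2kD$ of the hyperplane class on a resolution $S_d$ obtained by blowing up the base scheme of the Hilbert--Burch parametrization; Lemma \ref{lem:eleven} computes the class of $H_d$ in ${\rm Pic}(S_d)$ from the lowest-order terms of the minors of $B_d$ at the base point; and an intersection-theoretic case analysis on $D\cdot L$ and $D\cdot E_p$, together with the reduction $3k+2\geq d$ from Corollary \ref{maxk} and explicit checks for $k=2$, $d\leq 8$, yields the contradiction. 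None of this appears in your sketch --- you flag it yourself as ``the main obstacle'' --- so the first statement of the theorem is not proved.

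Your argument for the case $n\geq 3$, $d=3$, $k=2$ is, by contrast, correct, and it takes a genuinely different route from the paper. Dividing the common-tangent-vector equation by the second exponential reduces everything to the single degree-three identity in $(c,\ell',q')$, and the divisibility argument ($\delta\mid \ell'\,\Delta q$ with $\delta\nmid\Delta q$ for general parameters, hence $\ell'\in\langle\delta\rangle$ with $q'$ then determined) shows the two general tangent spaces meet in exactly a plane; Terracini's Lemma then gives $\dim{\rm Sec}_2(\mathcal{G}_{n,3})=n(n+3)-1$, two below the expected value since $N\geq n(n+3)+1$ for $n\geq 3$. The paper instead proves the lower bound on the defect by exhibiting, through any two points, a quartic threefold scroll $\rho(\mathbb{A}^3_{p,p'})$ whose affine span is only five-dimensional, and the upper bound by a symbolic computation for $n=3$ propagated to higher $n$; your single tangent-space computation delivers both bounds at once and explains uniformly why the phenomenon disappears for $n=1$ (there $\delta$ always divides $\Delta q$) and is invisible for $n=2$ (the span already fills the ambient space). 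What the paper's construction buys in exchange is the geometric identification of the entry locus and the input to the rational-identifiability statement in Theorem \ref{thm:mainsec2}, which your count does not address, though that is not required for Theorem \ref{thm:main} itself.
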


 Defective  Veronese varieties  are classified by
 the celebrated {\em Alexander-Hirschowitz Theorem} \cite{AH}.
 This is relevant for our discussion because each Veronese variety is 
 naturally contained in a corresponding Gaussian moment variety.
 The latter is a noisy version of the former, since the Veronese variety consists
 of the points on  $\mathcal{G}_{n,d}$ where the covariance matrix is zero.
  We refer to \cite[Section 6]{CB}. Remark \ref{rmk:shaowei}
  discusses other fixed covariance matrices.
   Note that Theorem~\ref{thm:main} proves the first part of Conjecture 15 in \cite{CB} about algebraic identifiability, and it also disproves the generalized ``natural conjecture'' stated after Problem~17 in~\cite{CB}.

Our result for $d=3$ is a Gaussian analogue of the infinite family
($d=2$) in the Alexander-Hirschowitz classification \cite{AH} of defective Veronese varieties.
Many further defective cases for $d=4$ are exhibited in 
Table \ref{tab:defective4} and Conjecture \ref{conj:eleven}.
Extensive computer experiments (up to $d=24$) suggest that moment  varieties are never defective for 
bivariate Gaussians ($n=2$).

\begin{conjecture} \label{conj:two}
Equality holds in (\ref{eq:secdim}) for $n=2$ and all values of $d$ and $k$.
In particular, all moment varieties of mixtures of bivariate Gaussians are algebraically identifiable.
\end{conjecture}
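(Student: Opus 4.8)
The plan is to apply Terracini's Lemma to convert the dimension of $\mathrm{Sec}_k(\mathcal{G}_{2,d})$ into an infinitesimal interpolation problem, and then to attack that problem by an induction modeled on the Horace method behind the Alexander--Hirschowitz Theorem \cite{AH}. By Terracini's Lemma, for general points $p_1,\dots,p_k\in\mathcal{G}_{2,d}$ we have $\dim\mathrm{Sec}_k(\mathcal{G}_{2,d})=\dim\langle T_{p_1}\mathcal{G}_{2,d},\dots,T_{p_k}\mathcal{G}_{2,d}\rangle$, so equality in \eqref{eq:secdim} is equivalent to the $k$ embedded tangent $5$-planes spanning a space of the maximal dimension $\min\{N,6k-1\}$. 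Dually, a hyperplane in $\PP^N$ is a bivariate polynomial $f$ of degree $\le d$, and its pairing with the Gaussian point $p(\mu,\Sigma)$ is the expectation $\Phi_f(\mu,\Sigma)=\bigl[\exp\bigl(\tfrac12(\sigma_{11}\partial_1^2+2\sigma_{12}\partial_1\partial_2+\sigma_{22}\partial_2^2)\bigr)f\bigr](\mu)$, the value of $f$ under the bivariate heat flow. The requirement that $f$ contain $T_{p_i}\mathcal{G}_{2,d}$ then reads as the vanishing of $\Phi_f$ together with its five first-order derivatives in $(\mu,\sigma_{11},\sigma_{12},\sigma_{22})$ at $(\mu^{(i)},\Sigma^{(i)})$. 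The conjecture thus becomes the statement that these $6k$ linear conditions on the $(N+1)$-dimensional space $H^0(\mathcal{O}_{\PP^2}(d))$ are independent for general data.

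First I would record the behavior as a Gaussian point degenerates onto the Veronese locus $\{\Sigma=0\}$. There the five parameter derivatives of $\Phi_f$ specialize, up to scalars, to $\partial_1 f,\ \partial_2 f,\ \partial_1^2 f,\ \partial_1\partial_2 f,\ \partial_2^2 f$ at $\mu$, so the six tangency conditions collapse precisely to the vanishing of $f$ to order three at $\mu$. Hence the limiting interpolation problem is that of $k$ general triple points $Z=3p_1+\dots+3p_k$ on $\PP^2$, and the limiting span has the expected dimension exactly when $h^0(\mathcal{I}_Z(d))=\max\{0,\,N+1-6k\}$. Since the dimension of the span of the tangent spaces is lower semicontinuous, nonspeciality of this triple-point scheme forces equality in \eqref{eq:secdim}. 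This reduction makes the conjecture plausible and simultaneously reveals its delicacy: it places the problem squarely inside the Segre--Harbourne--Gimigliano--Hirschowitz theory of general fat points in the plane.

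The heart of the argument would be an induction on $d$ and $k$ carried out by a specialization in the spirit of the m\'ethode d'Horace. Concretely, I would move the means $\mu^{(i)}$ onto a fixed line $L\subset\PP^2$ and split the degree-$d$ system through the Castelnuovo sequence $0\to\mathcal{I}_{Z'}(d-1)\to\mathcal{I}_Z(d)\to\mathcal{I}_{Z\cap L,\,L}(d)\to0$, reducing nonspeciality in degree $d$ to a residual statement in degree $d-1$ and a trace statement on $L$, both of lower complexity. The covariance directions provide flexibility that is absent in the pure Veronese (fat-point) setting, and I would exploit it through a differential Horace specialization, placing part of each infinitesimal datum tangent to $L$ and the rest transverse so as to balance the two sides of the sequence. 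A finite list of low-degree base cases would be verified directly, guided by and consistent with the reported computations up to $d=24$.

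The main obstacle is exactly the gap between the Gaussian tangent spaces and their triple-point limits. The limit is genuinely lossy whenever the triple-point scheme is special, and such configurations do occur: the simplest is $d=4$, $k=2$, where the quartics with two triple points form the special system $\mathcal{L}_4(3^2)$ of dimension $h^0=4$ rather than the expected $3$, because the line through the two points is forced into the base locus with multiplicity two. There the $\Sigma\to0$ degeneration proves only $\dim\mathrm{Sec}_2(\mathcal{G}_{2,4})\ge 10$, whereas the conjecture asserts the value $11$. Thus one cannot argue with fat points alone: one must run the induction with the genuine \emph{deformed} Gaussian tangent spaces and show that their extra covariance directions restore the independence lost in the special fat-point limits. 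Controlling these deformed tangent spaces under the Horace specialization, and ruling out sporadic Gaussian analogues of the Alexander--Hirschowitz exceptions, is the crux of the difficulty, and is why the statement remains a conjecture rather than a theorem.
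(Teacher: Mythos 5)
You are proposing a proof for a statement that the paper itself does not prove: this is Conjecture~\ref{conj:two}, and the only support the authors offer is computational evidence (experiments up to $d=24$). So there is no proof in the paper to compare against, and your proposal, by your own closing admission, is not a proof either --- it is a reduction plus a research program. To give credit where due: the reduction is correct. Terracini's Lemma does convert equality in (\ref{eq:secdim}) into the independence of $6k$ linear conditions on $H^0(\mathcal{O}_{\PP^2}(d))$, the heat-flow pairing $\Phi_f(\mu,\Sigma)$ is the right dual formulation, and the $\Sigma\to 0$ limit does degenerate the six tangency conditions at each point to a triple point. Your diagnosis of why this degeneration cannot suffice is also accurate: semicontinuity only gives a lower bound on the span, and that bound is strict whenever the fat-point system is special (your example of two triple points in degree $4$, where the doubled line enters the base locus, is the standard one). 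This matches the paper's own framing of the problem as a Gaussian analogue of Alexander--Hirschowitz.

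The genuine gap is that the heart of your argument --- the ``differential Horace'' induction carried out with the actual Gaussian tangent spaces rather than their fat-point limits --- is never formulated, let alone executed. The Castelnuovo sequence you invoke is a sequence of ideal sheaves of zero-dimensional subschemes of $\PP^2$, but the Gaussian tangency conditions are \emph{not} the conditions imposed by any subscheme of $\PP^2$: the five parameter-derivatives of $\Phi_f$ at a point with $\Sigma\neq 0$ involve the values of $f$ and its derivatives smeared by the heat kernel, i.e.\ they are conditions supported (in the apolarity sense) on all of $\PP^2$, not at $\mu$. Consequently there is no ``trace on $L$'' and no ``residual in degree $d-1$'' for these conditions, and the exactness that drives the Horace method has no counterpart here as stated. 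You would need a genuinely new semicontinuity-plus-specialization mechanism adapted to the five-dimensional parameter family (or a different degeneration that is not lossy), and you would need to rule out Gaussian analogues of the sporadic Alexander--Hirschowitz exceptions uniformly in $d$ and $k$ --- which is precisely the uniform dimension count that is missing. As it stands, your text establishes only the lower bound coming from nonspecial fat-point configurations, which is weaker than the conjecture; the statement remains open, exactly as the paper leaves it.
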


Our presentation is organized as follows. In Section 2 we focus on the case $n=1$. 
We review basics on the Gaussian moment surfaces $\mathcal{G}_{1,d}$,
and what is known classically on defectivity of surfaces. Based on this,
we then prove the first part of Theorem \ref{thm:main}.
In Section~3 we study our problem for $n \geq 2$. We begin with 
the parametric representation of ${\rm Sec}_k(\mathcal{G}_{n,d})$,
we next establish the second part of Theorem~\ref{thm:main},
and thereafter we study the defect and we examine higher moments.
Section 4 discusses what little we know   about the degree and equations of the varieties 
${\rm Sec}_k(\mathcal{G}_{n,d})$. Both Sections 3 and 4 feature many open problems.

\section{One-dimensional Gaussians}

The moments $m_0,m_1,m_2,\ldots,m_d$ of a Gaussian distribution on the real line
are polynomial expressions in the mean $\mu$ and the variance $\sigma^2$.
These expressions will be reviewed in Remark~\ref{rmk:familiar}.
They give a parametric representation
of the Gaussian moment surface $\mathcal{G}_{1,d}$ in $\PP^d$.
The following implicit representation of that surface was derived in
\cite[Proposition 2]{CB}.

\begin{proposition}
\label{prop:surface}
Let $d \geq 3$.
The homogeneous prime ideal of the Gaussian moment surface $\mathcal{G}_{1,d}$ 
is minimally generated by $\binom{d}{3}$ cubics.
These are the $3 \times 3$-minors of the $3 \times d$-matrix
$$ 
G_d \,\,= \, \left(\begin{array}{ccccccc} 
    0&m_0&2m_1&3m_2 & 4m_3 & \cdots & (d-1) m_{d-2}\\ 
    m_0& m_1& m_2 & m_3 & m_4 & \cdots & m_{d-1}\\
    m_1& m_2& m_3 & m_4 & m_5 &\cdots & m_d\\
    \end{array}\right).
$$  
The $3 \times 3$-minors of the matrix $G_d$ form a Gr\"obner basis
for the prime ideal of  $\mathcal{G}_{1,d} $ with respect to the 
reverse lexicographic term order. 
This implies that $\,\mathcal{G}_{1,d}$ has degree $\binom{d}{2}$ in $\PP^d$.
\end{proposition}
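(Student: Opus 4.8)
The plan is to prove the statement in three stages: first exhibit the recursion that places the parametrized surface inside the determinantal locus, then identify that locus using the theory of $1$-generic matrices, and finally upgrade the set-theoretic picture to the Gr\"obner basis and degree claims. To begin, I would record the classical three-term recursion for Gaussian moments, $m_{k+1} = \mu\, m_k + k\,\sigma^2 m_{k-1}$, valid in the affine chart $m_0 = 1$ and derived by differentiating the moment generating function $\exp(\mu t + \tfrac12 \sigma^2 t^2)$. Rewriting this as $\sigma^2 \cdot (j\, m_{j-1}) + \mu\, m_j - m_{j+1} = 0$ for every column index $j$ shows that the row vector $(\sigma^2,\mu,-1)$ annihilates all columns of $G_d$. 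Hence along the parametrization the three rows of $G_d$ are linearly dependent, every $3\times 3$ minor vanishes, and we obtain the containment $\mathcal{G}_{1,d} \subseteq V(I_3(G_d))$, where $I_3(G_d)$ denotes the ideal of maximal minors.

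For the reverse inclusion and for primality I would invoke Eisenbud's theory of $1$-generic matrices. The entries of $G_d$ are scalar multiples of the coordinates $m_0,\ldots,m_d$ together with one zero, so $G_d$ is a $3 \times d$ matrix of linear forms. The key verification is that $G_d$ is $1$-generic: for every nonzero $(a_1,a_2,a_3)$ and nonzero $(b_0,\ldots,b_{d-1})$ the linear form $\sum_j b_j\,(a_1 j\, m_{j-1} + a_2 m_j + a_3 m_{j+1})$ must be nonzero. Collecting the coefficient of $m_k$ converts this into the three-term relation $a_1(k{+}1)b_{k+1} + a_2 b_k + a_3 b_{k-1} = 0$ for all $k$, an overdetermined system which I would show admits no nonzero solution $b$ for any fixed nonzero $a$. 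Once $1$-genericity is in hand, Eisenbud's theorem guarantees that $I_3(G_d)$ is prime of codimension $d-2$, that the quotient is Cohen-Macaulay with minimal free resolution the Eagon-Northcott complex, and that the variety has degree $\binom{d}{2}$. Since $V(I_3(G_d))$ is then an irreducible surface containing the irreducible surface $\mathcal{G}_{1,d}$ of the same dimension, the two coincide and $I_3(G_d)$ is the homogeneous prime ideal. The first term of the Eagon-Northcott complex lists the $\binom{d}{3}$ maximal minors, and its minimality yields that these cubics minimally generate.

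It remains to prove the Gr\"obner basis statement, from which the degree $\binom{d}{2}$ also re-derives by counting standard monomials. Here I would expand, for a triple of columns $i<j<k$, the six monomials of the corresponding minor and determine which is largest under reverse lexicographic order with $m_0 > \cdots > m_d$. Because revlex favors low-index variables, a direct comparison singles out the leading monomial $m_{i+1} m_j m_{k-1}$ (with the evident modifications in the boundary cases $i=0$ or $j=k-1$). I would then show that the monomial ideal generated by these leading terms has the same Hilbert function as $S/I_3(G_d)$, computed from the Eagon-Northcott resolution; the equality of Hilbert functions forces the minors to be a Gr\"obner basis and simultaneously pins down the degree.

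The main obstacle is the pair of explicit, non-generic verifications. Because $G_d$ is far from a generic matrix (it has repeated variables, the scalar weights $j$ in the top row, and a zero entry), neither $1$-genericity nor the leading-term analysis can be quoted from the generic case. For the former one must rule out, by a careful analysis of the three-term recursion in the $b_k$, any degeneration of a generalized entry; for the latter one must track the leading monomials of all $\binom{d}{3}$ minors and verify that they generate an ideal of the correct Hilbert function, so that no unexpected reduction among the minors occurs.
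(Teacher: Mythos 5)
The paper does not actually prove this proposition; it imports it verbatim from \cite[Proposition 2]{CB}, so your attempt can only be judged against the underlying mathematics. Your first stage is correct: the recursion $m_{k+1}=\mu m_k + k\sigma^2 m_{k-1}$ (equivalently, the kernel vector $(\sigma^2,\mu,-1)$, which is the $(-x,y,z)$ of Remark~\ref{rmk:familiar}) does show that every $3\times 3$ minor of $G_d$ vanishes on $\mathcal{G}_{1,d}$. Your leading-term identification $m_{i+1}m_jm_{k-1}$ (the antidiagonal product) for the reverse lexicographic order is also correct.

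The genuine gap is the central claim of your second stage: $G_d$ is \emph{not} $1$-generic, and this cannot be repaired. The $(1,1)$ entry of $G_d$ is the zero linear form, so the generalized entry attached to $a=(1,0,0)$ and $b=e_1$ vanishes identically; a matrix with a zero generalized entry is by definition not $1$-generic, and since $1$-genericity is invariant under the $GL_3\times GL_d$ action, no row/column operations can fix this. Concretely, your own three-term system $a_1(k{+}1)b_{k+1}+a_2b_k+a_3b_{k-1}=0$ \emph{does} admit the nonzero solution $a=(1,0,0)$, $b=(1,0,\ldots,0)$, because $b_0$ never meets a nonzero entry of the top row. Consequently Eisenbud's theorem cannot be quoted, and everything you derive from it --- primality, codimension $d-2$, Cohen--Macaulayness, the Eagon--Northcott resolution, the degree $\binom{d}{2}$, minimal generation, and the Hilbert-function input to your Gr\"obner basis argument --- is left unsupported. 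The standard repair is to establish codimension and irreducibility of $V(I_3(G_d))$ by hand, exactly as in the chart analysis in the proof of Lemma~\ref{lem:Gsing}: on $\{m_0=1\}$ the minors with columns $1,2,i$ have the form $m_i-h(m_0,\ldots,m_{i-1})$, so that chart is $\mathbb{A}^2$, while on $\{m_0=0\}$ the minors force $m_1=\cdots=m_{d-2}=0$, a line lying in the closure of the affine chart. This gives grade $d-2$, hence exactness of the Eagon--Northcott complex (which needs only the codimension, not $1$-genericity), hence unmixedness and the degree; combined with irreducibility and generic reducedness one gets primality, and your Hilbert-function comparison for the Gr\"obner basis can then proceed.
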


Our first new result concerns the singular locus on the Gaussian moment surface.

\begin{lemma} \label{lem:Gsing}
The singular locus of the surface $\,\mathcal{G}_{1,d}\,$ is the line defined by
$\langle m_0,m_1,\ldots,m_{d-2} \rangle$.
\end{lemma}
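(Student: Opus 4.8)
The plan is to realize $\mathcal{G}_{1,d}$ as a linear section of a generic determinantal variety and to compute Zariski tangent spaces by pulling back the known tangent spaces there. Write $L\colon \PP^d \to \PP(\mathrm{Mat}_{3\times d})$ for the linear embedding $[m_0:\cdots:m_d]\mapsto G_d$, so that by Proposition~\ref{prop:surface} we have $\mathcal{G}_{1,d} = L^{-1}(\mathrm{Det}_2)$, where $\mathrm{Det}_2$ is the variety of $3\times d$ matrices of rank $\le 2$. First I would record the structural fact behind this: the Gaussian recurrence $m_j = \mu\, m_{j-1} + (j-1)\sigma^2 m_{j-2}$ says that at the point with parameters $(\mu,\sigma^2)$ the three rows $R_1,R_2,R_3$ of $G_d$ satisfy $R_3 = \mu R_2 + \sigma^2 R_1$, so the generic rank is $2$ with left kernel spanned by $\ell=(-\sigma^2,-\mu,1)$. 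I would also check at once that the claimed line $\Lambda = \langle m_0,\ldots,m_{d-2}\rangle$ lies on the surface: on $\Lambda$ the entire top row of $G_d$ vanishes, so $\mathrm{rank}\,G_d \le 2$ there.

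The computational heart is the tangent space at a point $p$ where $\mathrm{rank}\,G_d(p)=2$. There the standard description of the tangent space of a determinantal variety, pulled back through the linear map $L$, gives
\[
T_p \mathcal{G}_{1,d} \;=\; \bigl\{\, \dot m \in \CC^{d+1} \;:\; \ell\,G_d(\dot m)\,w = 0 \ \text{ for all } w\in\ker G_d(p) \,\bigr\},
\]
where $\ell$ is the left kernel vector of $G_d(p)$ and $G_d(\dot m)$ denotes $G_d$ evaluated at $\dot m$. The key observation is that for $\ell=(\ell_1,\ell_2,\ell_3)$ the row vector $\ell\,G_d(\dot m)$ has $j$-th entry $\ell_1(j-1)\dot m_{j-2}+\ell_2 \dot m_{j-1}+\ell_3 \dot m_j$, i.e.\ it applies the recurrence operator to $\dot m$. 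When $\ell_3\ne 0$ this linear map $\CC^{d+1}\to\CC^d$ is surjective with one-dimensional kernel (the radial direction $p$), so the tangent space has affine dimension $1+\dim(\mathrm{rowspace}\,G_d(p)) = 3$ and $p$ is smooth; when $\ell_3=0$ the coordinate $\dot m_d$ drops out of $\ell\,G_d(\dot m)$ and the tangent space jumps.

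It then remains to sort the points of $\mathcal{G}_{1,d}$ by this dichotomy. I would show that $\ell_3=0$ forces $p\in\Lambda$: it makes $R_1,R_2$ linearly dependent, and chasing this column by column yields $m_0=\cdots=m_{d-2}=0$. Dually, off $\Lambda$ every rank-$2$ point has $\ell_3\ne 0$ and is smooth, and I would rule out rank-$\le 1$ points off $\Lambda$ by noting that $\mathrm{rank}\,G_d\le 1$ forces the bottom Hankel block onto the rational normal curve, on which $G_d$ in fact has rank $2$ except at the single point $[0:\cdots:0:1]\in\Lambda$. This proves $\mathrm{Sing}\subseteq\Lambda$. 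For the reverse inclusion I would compute directly on $\Lambda$ with $\ell=(1,0,0)$: the condition $\ell\,G_d(\dot m)\in\mathrm{rowspace}\,G_d(p)=\langle e_{d-1},e_d\rangle$ leaves four free coordinates $\dot m_{d-3},\ldots,\dot m_d$, so $\dim T_p = 4$ and every point of $\Lambda$ with $m_{d-1}\ne 0$ is singular; since $\mathrm{Sing}$ is closed and these points are dense in $\Lambda$, the whole line is singular. The main obstacle is the bookkeeping around the rank stratification: the pulled-back tangent-space formula is valid only at rank-exactly-$2$ points, so one must separately confirm that the unique rank-$1$ point sits on $\Lambda$ and that no rank-$2$ point off $\Lambda$ has $\ell_3=0$.
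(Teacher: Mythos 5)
Your argument is correct, and it reaches the conclusion by a genuinely different route than the paper. The paper splits the two inclusions asymmetrically: for ${\rm Sing} \subseteq \mathcal{L}$ it observes that on the chart $\{m_0=1\}$ the minors with columns $1,2,i$ solve triangularly for $m_i$, so $\mathcal{G}_{1,d}\cap\{m_0=1\}\simeq\mathbb{A}^2$ is smooth, and then shows $\mathcal{G}_{1,d}\cap\{m_0=0\}=\mathcal{L}$ by chasing minors; for $\mathcal{L}\subseteq{\rm Sing}$ it counts the nonzero entries of the Jacobian of the cubic generators along $\mathcal{L}$ (only the $d-3$ terms $m_im_{d-1}^2$ survive), forcing rank $\leq d-3 < d-2 = {\rm codim}$. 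You instead run a single uniform tangent-space computation through the determinantal structure: $T_p\mathcal{G}_{1,d}=L^{-1}(T_{G_d(p)}{\rm Det}_2)=\{\dot m : \ell\,G_d(\dot m)\in\mathrm{rowspace}\,G_d(p)\}$ at rank-exactly-$2$ points, with the smooth/singular dichotomy governed by whether the last coordinate of the left kernel vector $\ell=(-\sigma^2,-\mu,1)$ vanishes. Both proofs ultimately rest on Proposition~\ref{prop:surface} supplying the prime ideal, and your bookkeeping is complete where it needs to be: you correctly note that the determinantal tangent-space formula applies only on the rank-exactly-$2$ stratum, you confine the rank-$\leq 1$ locus to the single point $(0:\cdots:0:1)\in\mathcal{L}$ via the rational normal curve, and you show $\ell_3=0$ forces $p\in\mathcal{L}$. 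What the paper's route buys is brevity and a reusable fact (smoothness of the whole affine chart, invoked again in Section~3); what yours buys is sharper information, namely the exact tangent space dimension $4$ along the generic point of $\mathcal{L}$, which refines the paper's one-sided bound (rank $\leq d-3$ of the Jacobian only gives $\dim T_p\geq 4$), plus the identification of the left kernel with the model parameters. One cosmetic imprecision: when $\ell_3=0$ and $\ell_2\neq 0$ the column chase actually gives $m_0=\cdots=m_{d-1}=0$, a single point of $\mathcal{L}$, while $\ell_2=0$ gives all of $\mathcal{L}$; either way the needed conclusion $p\in\mathcal{L}$ holds, so this is not a gap.
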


\begin{proof}
Let $\mathcal{L}$ be the line defined by
$\langle m_0,m_1,\ldots,m_{d-2} \rangle$ and 
$\mathcal{S} = {\rm Sing}(\mathcal{G}_{1,d} )$.
We claim $\mathcal{L} = \mathcal{S}$.

We first show that $\mathcal{S} \subseteq \mathcal{L}$.
Consider the affine open chart $\{m_0 = 1\}$ of $\,\mathcal{G}_{1,d}$.
On that chart,  the coordinates $m_i$
are polynomial functions in the unknowns $m_0,\ldots,m_{i-1}$, for $i \geq 3$.
Indeed,  the $3 \times 3$-minor of $G_d$
with column indices $1,2$ and $i$ has the form
$m_i - h(m_0,\ldots,m_{i-1})$. Hence
$\mathcal{G}_{1,d} \,\cap \,\{m_0 = 1\}  \simeq \mathbb{A}^2$,
and therefore $\mathcal{S} \subset \{m_0 = 0\}$.
Next suppose $m_0 = 0$. The leftmost
$3 \times 3$-minor of $G_d$ implies $m_1 = 0$. Now,
the minor with columns $2,3,4$ implies that $m_2 = 0$,
the minor with columns $3,4,5$ implies that $m_3 = 0$, etc.
From the rightmost minor we conclude $m_{d-2} = 0$.
This shows that  $\,\mathcal{G}_{1,d} \,\cap \,\{m_0 = 0\} = \mathcal{L}$,
and we conclude $\mathcal{S} \subseteq \mathcal{L}$.

For the reverse inclusion $\mathcal{L} \subseteq \mathcal{S}$,
we consider the Jacobian matrix of the cubics that define
$\mathcal{G}_{1,d}$. That matrix has $d+1$ rows
and $\binom{d}{3}$ columns. We claim that it has rank
$\leq d-3$ on~$\mathcal{L} $. To see this, 
note that the term $m_i m_{d-1}^2$ appears in the 
minor of $G_d$ with columns $i,d-1,d$ for $i=2,\ldots,d-2$,
and that all other occurrences of $m_{d-1}$ or $m_d$
in any of the $3 \times 3$-minors of $G_d$ is linear. Therefore the 
Jacobian matrix restricted to $\mathcal{L}$
has only $d-3$ non-zero entries, and so its rank
is at most $d-3$. This is less than
$d-2 = {\rm codim}(\mathcal{G}_{1,d})$.
We conclude that all points on the line $\mathcal{L}$
are singular points in the Gaussian moment surface $\mathcal{G}_{1,d}$.
\end{proof}

The $3 \times d$-matrix $G_d$ has entries that are linear forms in
$d+1$ unknowns $m_0,\ldots,m_{d}$. That matrix may be interpreted 
as a $3$-dimensional tensor of format $3 \times d \times (d+1)$. That tensor can
be turned into a $d \times (d+1)$ matrix whose entries are linear
forms in three unknowns $x,y,z$. The result is 
what we call the \textit{Hilbert-Burch matrix} of
our surface $\mathcal{G}_{1,d}$. 
It equals
\begin{equation}
\label{eq:hilbertburch}
B_d \,\,= \, \left(\begin{array}{cccccc} 
    y&z&0&0 & \cdots & 0\\ 
    x& y& z & 0& \cdots & 0\\
    0& 2x& y & z & \cdots & 0\\
    0& 0 & 3x &y & \cdots & 0 \\
    \vdots & \vdots & \vdots & \vdots & & \vdots\\
    0 & 0 & \cdots &(d-1)x & y & z\\
    \end{array}\right).
\end{equation}
Its maximal minors generate a
Cohen-Macaulay ideal, defining a
scheme $Z_d$ of length $\binom{d+1}{2}$ supported at the point $(1:0:0)$. 
Consider the map defined by the maximal minors of $B_d$,
$$\phi: \PP^2 \dashrightarrow \PP^d .$$ 
The base locus of the map $\phi$
is the scheme $Z_d$  and its image is the surface $\mathcal{G}_{1,d}$.  
\begin{remark} \label{rmk:familiar}
\rm
The parametrization $\phi$ onto $\mathcal{G}_{1,d}$ is birational. It equals the familiar
affine parametrization, as in (\ref{eq:gaussian}), of the Gaussian moments in terms of mean and variance if we set
\begin{equation}
\label{eq:xyz}
 x = -\sigma^2 \, , \,\,\,
y = \mu 
\,\,\, {\rm and} \,\,\,
z = 1. 
\end{equation}
The image 
of the line $\{x=-\sigma^2z\}$, for fixed value of the variance $\sigma^2$, is a 
rational normal curve of degree $d$
 inside the Gaussian moment surface $\mathcal{G}_{1,d}$.
It is defined by the $2\times 2$-minors of a $2$-dimensional space of rows in the matrix $G_d$.  
The singular line $\mathcal{L}\subset \mathcal{G}_{1,d}$ is the tangent line to this curve at the point $(0:\cdots:0:1)$. In particular, the image of the line $\{x=0\}$ is the rational normal curve defined by the $2\times 2$-minors of the last two rows of $G_d$.
\end{remark}

We now come to our main question, namely whether there exist $d$ and $k$ such that
$\mathcal{G}_{1,d}$ is $k$-defective in $\PP^d$.  Theorem \ref{thm:main} asserts
that this is not the case. Equivalently,  the dimension~of
${\rm Sec}_k (\mathcal{G}_{1,d})$ is always
equal to the minimum of $d$ and $3k-1$,
which is the upper bound  in~(\ref{eq:secdim}).

Curves can never be defective, but surfaces can.
The prototypical example 
is the Veronese surface $S $ in the space  $\PP^5$ of
symmetric $3 \times 3$-matrices. Points on $S$ are matrices of rank $1$.
The secant variety ${\rm Sec}_2(S)$ consists of matrices of rank $\leq 2$. Its expected dimension is five
whereas the true dimension of $S$ is only four. This means that $S$ is $k$-defective for $k=2$.

The following well-known result on higher secant varieties of a variety $X$ allows us to show that $X$ is not 
$k$-defective for any $k$ by proving this for one particular $k$ (see \cite{Adl}):

\begin{proposition}\label{defect}
Let $X$ be a $k'$-defective subvariety of $\PP^d$ and $k>k'$.  Then $X$ is $k$-defective as long as 
${\rm Sec}_{k}(X)$ is a proper subvariety of $ \PP^d$. In fact, the defectivity increases with~$k$:
\begin{equation} \label{eq:leftminusright}
({\rm dim} (X)+1) \cdot k-1-{\rm dim}({\rm Sec}_k(X)) \,\,>\,\, ({\rm dim} (X)+1) \cdot k'-1-{\rm dim}({\rm Sec}_{k'}(X)).
\end{equation}
\end{proposition}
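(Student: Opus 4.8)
The plan is to reduce everything to Terracini's lemma and a convexity property of the defect sequence. Write $n = \dim X$ and pass to affine cones in $\mathbb{A}^{d+1}$. For general points $x_1,\dots,x_k$ of $X$ let $T_i \subseteq \mathbb{A}^{d+1}$ be the affine tangent space to the cone over $X$ at $x_i$, so that $\dim T_i = n+1$. Terracini's lemma identifies the affine tangent space to ${\rm Sec}_k(X)$ at a general point of the span with $T_1+\cdots+T_k$, giving $\dim {\rm Sec}_k(X) + 1 = \dim(T_1+\cdots+T_k)$. Setting $\delta_k := (\dim X+1)k - 1 - \dim {\rm Sec}_k(X)$, which is exactly the left-hand side of (\ref{eq:leftminusright}), I would first record the identity
\[
\delta_k \;=\; \sum_{i=1}^{k}\dim T_i \;-\; \dim\bigl(T_1+\cdots+T_k\bigr),
\]
valid whenever ${\rm Sec}_k(X)\subsetneq\PP^d$, so that no truncation against the ambient dimension occurs. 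Thus $\delta_k\ge 0$, and $X$ is $k$-defective precisely when $\delta_k>0$.

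The next step is the increment computation. Writing $W_{k-1}=T_1+\cdots+T_{k-1}$, the dimension formula $\dim(T_k+W_{k-1}) = \dim T_k + \dim W_{k-1} - \dim(T_k\cap W_{k-1})$ together with the identity above yields
\[
\delta_k - \delta_{k-1} \;=\; \dim\bigl(T_k \cap W_{k-1}\bigr) \;=:\; c_k \;\ge\; 0 .
\]
So $(\delta_k)$ is already non-decreasing, and the whole proposition follows once I prove that the increments $c_k$ are themselves non-decreasing, i.e. that $\delta_k$ is convex. Granting this, if $X$ is $k'$-defective then $\delta_{k'}=c_2+\cdots+c_{k'}>0$, so $c_j>0$ for some $j\le k'$; monotonicity of the increments forces $c_{k'}\ge c_j>0$, and hence for every $k>k'$ with ${\rm Sec}_k(X)$ proper one obtains $\delta_k-\delta_{k'}=\sum_{j=k'+1}^{k}c_j\ge (k-k')\,c_{k'}>0$, which is precisely (\ref{eq:leftminusright}); in particular $\delta_k>0$, so $X$ is $k$-defective.

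The main obstacle is therefore the convexity claim $c_k\ge c_{k-1}$, and this is where genericity does the work. I would exploit that $T_1,\dots,T_k$ are tangent spaces at \emph{general} points, so any sub-collection of size $k-1$ is again a general configuration and the tangent spaces are interchangeable. Concretely, $c_{k-1}$ is the generic value $\dim\bigl(T_{k-1}\cap(T_1+\cdots+T_{k-2})\bigr)$; replacing the test space $T_{k-1}$ by the equally general $T_k$ rewrites this as $c_{k-1}=\dim\bigl(T_k\cap(T_1+\cdots+T_{k-2})\bigr)$. Since $T_1+\cdots+T_{k-2}\subseteq W_{k-1}$, enlarging the second factor can only enlarge the intersection, whence $c_{k-1}=\dim\bigl(T_k\cap(T_1+\cdots+T_{k-2})\bigr)\le\dim(T_k\cap W_{k-1})=c_k$. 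The only delicate point is the interchange: one must verify that the sub-configuration $(T_1,\dots,T_{k-2},T_k)$ of a general $k$-point configuration attains the generic intersection dimension for $(k-1)$-point configurations, which is a standard upper-semicontinuity statement for $\dim\bigl(T_k\cap(T_1+\cdots+T_{k-2})\bigr)$ as the points vary. With this in hand, convexity of $(\delta_k)$, and hence the proposition, follows.
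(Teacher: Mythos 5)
Your proof is correct, and it rests on the same foundation as the paper's: Terracini's Lemma, which gives $\dim {\rm Sec}_k(X)+1=\dim(T_1+\cdots+T_k)$ for affine tangent spaces at $k$ general points, combined with the interchangeability of general points. The mechanics, however, are genuinely different. The paper splits the $k$ tangent spaces into a block of $k'$ and a block of $k-k'$, asserts that the two spans must meet in positive dimension, and identifies the excess $\delta_k-\delta_{k'}$ with the dimension of that intersection (strictly speaking this is only a lower bound, since the second block of $k-k'$ tangent spaces may itself span deficiently when $k-k'\geq 2$). You instead telescope one tangent space at a time, recording $\delta_k-\delta_{k-1}=c_k=\dim\bigl(T_k\cap(T_1+\cdots+T_{k-1})\bigr)$, and prove that the increments $c_k$ are non-decreasing via the semicontinuity/genericity of the sub-configuration $(T_1,\ldots,T_{k-2},T_k)$. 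That convexity of the defect sequence is precisely the point the paper's three-sentence proof glosses over: without it, the positivity of the intersection of the two blocks is not self-evident from $k'$-defectivity alone. Your route is therefore a more careful elaboration of the published argument; it buys an explicit quantitative bound $\delta_k-\delta_{k'}\geq (k-k')\,c_{k'}>0$ and a clean justification of the strict inequality (\ref{eq:leftminusright}), at the cost of a slightly longer discussion of genericity.
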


\begin{proof} By Terracini's Lemma, the dimension of the secant variety ${\rm Sec}_{k}(X)$ is the dimension 
of the span of the tangent spaces to $X$ at $k$ general points.  Since $X$ is $k'$-defective and $k'<k$,
 the linear span of $k-k'$ general tangent spaces to  the affine cone over $X$ must intersect the span of $k'$
 such general tangent spaces in a positive-dimensional linear space.  The dimension of that intersection is 
 the difference of the left hand side minus the right hand side in (\ref{eq:leftminusright}).
 \end{proof}

\begin{corollary}\label{maxk} If a surface $X\subset \PP^d$ is  defective, 
then $X$ is $k$-defective for some $k\geq (d-2)/3$.
\end{corollary}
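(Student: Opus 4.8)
The plan is to apply Proposition~\ref{defect} at the largest secant level that is still a proper subvariety. First I would record the basic parameter bound
${\rm dim}({\rm Sec}_k(X)) \leq \min\{3k-1,\,d\}$,
which holds because a general point of ${\rm Sec}_k(X)$ depends on $k$ points of the surface $X$ (contributing $2k$ parameters, since ${\rm dim}(X)=2$) together with the $k-1$ coefficients in their linear span. Thus for every $k$ the actual dimension of the $k$th secant variety is bounded by the expected dimension $\min\{3k-1,d\}$.

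Next, let $k_{\max}$ denote the largest integer $k$ for which ${\rm Sec}_k(X) \subsetneq \PP^d$, so that ${\rm Sec}_{k_{\max}+1}(X) = \PP^d$. I would first observe that $X$ cannot be defective at any level $k$ with ${\rm Sec}_k(X) = \PP^d$: there ${\rm dim}({\rm Sec}_k(X)) = d$ forces $3k-1 \geq d$ via the parameter bound, so the actual dimension equals the expected dimension $\min\{3k-1,d\}=d$. Consequently, if $X$ is defective at all, then its smallest defective level $k'$ must satisfy ${\rm Sec}_{k'}(X)\subsetneq \PP^d$, and hence $k' \leq k_{\max}$.

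Then I would invoke this together with Proposition~\ref{defect}. Since $X$ is $k'$-defective with $k' \leq k_{\max}$ and ${\rm Sec}_{k_{\max}}(X)$ is a proper subvariety, the surface $X$ is $k_{\max}$-defective (immediately if $k'=k_{\max}$, and by Proposition~\ref{defect} if $k' < k_{\max}$). It remains to bound $k_{\max}$ from below, and for this I use that ${\rm Sec}_{k_{\max}+1}(X) = \PP^d$ has dimension $d$, while the parameter bound gives $d = {\rm dim}({\rm Sec}_{k_{\max}+1}(X)) \leq 3(k_{\max}+1)-1 = 3k_{\max}+2$. Hence $k_{\max} \geq (d-2)/3$, and the level $k=k_{\max}$ witnesses the assertion.

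The argument is essentially bookkeeping with dimensions once Proposition~\ref{defect} is in hand, so I do not expect a serious obstacle. The one point that needs care is the observation that a secant variety filling all of $\PP^d$ is automatically nondefective; this is what guarantees that the smallest defective level does not exceed $k_{\max}$, and therefore that Proposition~\ref{defect} may be used to propagate defectivity all the way up to $k_{\max}$.
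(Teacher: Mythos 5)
Your proof is correct and takes essentially the same route as the paper: both arguments propagate defectivity upward via Proposition~\ref{defect}, using the elementary count $\dim\mathrm{Sec}_k(X)\leq 3k-1$ to certify that the relevant secant varieties remain proper and to extract the bound $k\geq (d-2)/3$. The only difference is organizational --- the paper runs an induction on $k$ while $k<(d-2)/3$, whereas you jump directly to the largest level $k_{\max}$ with $\mathrm{Sec}_{k_{\max}}(X)\subsetneq\PP^d$ and bound it from below.
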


\begin{proof}
We proceed by induction on $k$. If the surface $X$ is $(k-1)$-defective and $k< (d-2)/3$, 
then ${\rm dim} ({\rm Sec}_{k}(X))<3k+2 < d$.  So $X$ is also $k$-defective, by Proposition \ref{defect}.
\end{proof}

Our main geometric tool is Terracini's 1921 classification  of all $k$-defective surfaces:

\begin{theorem}(Classification of $k$-defective surfaces) \label{thm:terra}
Let $X \subset \PP^N$ be a reduced, irreducible, non-degenerate projective surface that is $k$-defective. Then $k \geq 2$ and either
\begin{enumerate}
\item[(1)] $X$ is the quadratic Veronese embedding of a rational normal surface $Y$ in $\PP^k$; or
\item[(2)] $X$ is contained in a
  cone over a curve, with apex a linear space of dimension $\leq k-2$.
\end{enumerate}
Furthermore, for general points $x_1,\ldots,x_k$ on $X$
there is a hyperplane section tangent along a curve $C$ that passes through these points.
In case (1), the curve $C$ is irreducible; 
in case (2), the curve $C$ decomposes into $k$ algebraically equivalent curves 
$C_1,\ldots,C_k$ with $x_i\in C_i$.
\end{theorem}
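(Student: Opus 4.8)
The plan is to derive everything from Terracini's Lemma, which identifies the tangent space to $ {\rm Sec}_k(X)$ at a general point with the linear span $\langle T_{x_1}X,\ldots,T_{x_k}X\rangle$ of the embedded tangent planes at $k$ general points $x_1,\ldots,x_k$ of $X$. Thus $k$-defectivity is exactly the statement that these $k$ planes fail to be linearly independent, the defect being the dimension of their overlap. First I would record that $k\geq 2$, since ${\rm Sec}_1(X)=X$ has its expected dimension and no defect can arise for a single point. The main object of study is then the linear system $\mathcal{H}$ of hyperplanes containing $\langle T_{x_1}X,\ldots,T_{x_k}X\rangle$: each $H\in\mathcal{H}$ is tangent to $X$ at every $x_i$, and since each $T_{x_i}X$ is a $\PP^2$ imposing three conditions, defectivity makes $\dim\mathcal{H}$ strictly larger than the expected value $N-3k$. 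This surplus forces the \emph{contact locus} of a general $H\in\mathcal{H}$ --- the set of points of $X$ at which $H$ is tangent, equivalently the singular locus of the section $H\cap X$ --- to be positive-dimensional.

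The heart of the argument is a local study of this contact locus. Through each general point $x_i$ I would show that the contact locus contains an irreducible curve $C_i$, obtained by deforming $x_i$ inside the contact scheme as $H$ varies in $\mathcal{H}$. The key local tool is the second fundamental form of $X$: along a curve of contact the hyperplane $H$ contains every tangent plane of $X$, so the second fundamental form must degenerate in the direction tangent to $C_i$, which both produces the curve and constrains $X$ sharply. With the curves $C_i$ in hand, the decisive dichotomy is whether the contact locus through general points is a single irreducible curve $C$ containing all $x_i$, or splits into $k$ distinct irreducible components $C_1,\ldots,C_k$ with $x_i\in C_i$. This irreducible-versus-reducible alternative is precisely what separates case (1) from case (2), and the ``furthermore'' assertions read off directly: in the irreducible case $C$ is the single contact curve, while in the reducible case a monodromy/family argument shows the $C_i$ are algebraically equivalent, being the $k$ members cut out on $X$ by $x_1,\ldots,x_k$ inside one algebraic system of curves.

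In the irreducible case I would argue that the degeneration of the second fundamental form along $C$ collapses the Gauss image of $X$ in the direction of $C$, and the classical analysis of surfaces whose general tangent-hyperplane section is singular along a curve then forces $X$ to be the quadratic Veronese re-embedding $v_2(Y)$ of a surface $Y$, the contact curves being the $v_2$-images of the curves of $Y$ that the degeneracy singles out. A degree and nondegeneracy count pins $Y$ down as a surface of minimal degree, i.e.\ a rational normal surface in $\PP^k$, giving case (1). In the reducible case each $C_i$ spans a linear space $\Pi_i$, and the requirement that one hyperplane be tangent along all of them, combined with the algebraic equivalence of the $C_i$, forces these spans to share a common linear subspace; the intersection of the $\Pi_i$ produces the apex $L$, a dimension count gives $\dim L\leq k-2$, and one checks that $X$ lies in the cone with vertex $L$ over the curve parametrizing the family $\{C_i\}$, which is case (2).

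The step I expect to be the main obstacle is the classification within the irreducible case: turning ``the contact locus is an irreducible curve of tangency'' into the precise statement that $X=v_2(Y)$ with $Y$ a rational normal surface. This demands control of the full second fundamental form, not just its degeneration in one direction, and the elimination of intermediate configurations; it is here that Terracini's original analysis --- and its modern reworking through the contact locus and weak-defectivity machinery --- does the genuine work. By comparison, the bookkeeping yielding the apex bound $\dim L\leq k-2$ and the algebraic equivalence of the $C_i$ in case (2) is routine once the family of contact curves has been understood.
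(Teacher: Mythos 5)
The paper does not actually prove this theorem: it is Terracini's 1921 classification, and the proof given in the paper is a citation to Chiantini--Ciliberto \cite[Theorem 1.3 (i),(ii)]{CC}. Your sketch follows the strategy of that modern treatment --- Terracini's Lemma, the linear system $\mathcal{H}$ of hyperplanes tangent at $k$ general points, a positive-dimensional contact locus, and the dichotomy between an irreducible contact curve and one splitting into $k$ components --- so the skeleton is the right one. But as a proof it has genuine gaps, and they sit exactly where the content of the theorem lies. First, the inference from ``$\dim \mathcal{H}$ exceeds $N-3k$'' to ``the contact locus of a general $H \in \mathcal{H}$ is positive-dimensional'' is not a formal consequence of the dimension count on $\mathcal{H}$; it is the implication ``defective $\Rightarrow$ weakly defective,'' which needs the entry-locus argument: through a general point of ${\rm Sec}_k(X)$ there is a positive-dimensional family of $k$-secant $(k-1)$-planes, and a general hyperplane containing the Terracini tangent space is then tangent along the whole (positive-dimensional) entry locus. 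You should state that argument rather than attribute the conclusion to the ``surplus'' of hyperplanes.

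Second, and more seriously, both halves of the classification are asserted rather than proven. In case (1) you write that ``the classical analysis \dots forces $X$ to be the quadratic Veronese re-embedding $v_2(Y)$'' with $Y$ of minimal degree; this \emph{is} the theorem, and while you correctly flag it as the main obstacle, nothing in the proposal rules out other surfaces whose general tangent hyperplane section is singular along an irreducible curve, nor explains why the contact curves are conics on a minimal-degree surface $Y \subset \PP^k$. In case (2), the claims that the spans of the $C_i$ share a common linear subspace of dimension $\leq k-2$ and that $X$ lies in the cone over the curve parametrizing the family are likewise the substance of the result; ``one checks'' is doing all the work, and the algebraic equivalence of the $C_i$ is not established by monodromy alone (one must first exhibit them as members of a single algebraic family of contact curves). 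Since the paper itself treats the theorem as a black box, the honest options are either to cite \cite{CC} as the paper does, or to reproduce the full contact-locus analysis of that paper; your proposal is a correct table of contents for the latter, not yet a proof.
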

\begin{proof}  See  \cite[Theorem 1.3 (i),(ii)]{CC} and cases (i) and (ii) of the proof given there.
\end{proof}

Chiantini and Ciliberto  offer a nice historical account
of this theorem in the introduction to their article \cite{CC}.
A modern proof follows from the more general result in \cite[Theorem 1.1]{CC}.

 \begin{corollary} \label{cor:statement2}
If the surface $ X = \mathcal{G}_{1,d}$ is $k$-defective, then
statement (2) in Theorem \ref{thm:terra} holds. 
 \end{corollary}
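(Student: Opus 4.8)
The plan is to apply Theorem~\ref{thm:terra} to $X = \mathcal{G}_{1,d}$ and to rule out alternative (1), so that alternative (2) must be the one that holds. The surface $\mathcal{G}_{1,d}$ is reduced, irreducible, and non-degenerate in $\PP^d$ (its ideal from Proposition~\ref{prop:surface} is generated by cubics, so it contains no linear form), hence the hypotheses of the classification are satisfied. The only tool I need is the description of the singular locus from Lemma~\ref{lem:Gsing}: the surface $\mathcal{G}_{1,d}$ is singular exactly along the line $\mathcal{L} = \langle m_0, \ldots, m_{d-2}\rangle$, which is one-dimensional.

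The key observation is that the quadratic Veronese map $v_2 \colon \PP^k \to \PP^{\binom{k+2}{2}-1}$ is a closed immersion, hence an isomorphism onto its image. Restricting it to any closed surface $Y \subset \PP^k$ therefore yields an isomorphism $Y \xrightarrow{\sim} v_2(Y)$, under which singular points correspond to singular points. In particular $\,{\rm dim}\,{\rm Sing}(v_2(Y)) = {\rm dim}\,{\rm Sing}(Y)$. So to exclude case (1) it suffices to bound the dimension of the singular locus of a rational normal surface.

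A rational normal surface $Y \subset \PP^k$ is a surface of minimal degree $k-1$, and by the Del Pezzo--Bertini classification of such surfaces $Y$ is either the Veronese surface in $\PP^5$, a smooth rational normal scroll, or a cone over a rational normal curve. In the first two cases $Y$ is smooth, and in the last case ${\rm Sing}(Y)$ is the single vertex of the cone; thus $\,{\rm dim}\,{\rm Sing}(Y) \leq 0$ in every case, and by the previous paragraph $\,{\rm dim}\,{\rm Sing}(v_2(Y)) \leq 0$ as well. Since $\mathcal{G}_{1,d}$ is singular along the line $\mathcal{L}$, we have $\,{\rm dim}\,{\rm Sing}(\mathcal{G}_{1,d}) = 1$, so $\mathcal{G}_{1,d}$ cannot be of the form $v_2(Y)$ and case (1) is ruled out. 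The one place where care is needed is the classification of the possible singularities of a rational normal surface; once that input is in hand, the contradiction with the one-dimensional singular line of Lemma~\ref{lem:Gsing} is immediate, and I conclude that a $k$-defective $\mathcal{G}_{1,d}$ must fall under statement (2).
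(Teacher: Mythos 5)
Your proposal is correct and follows essentially the same route as the paper: both rule out case (1) of Theorem~\ref{thm:terra} by observing that a quadratic Veronese re-embedding of a rational normal surface has singular locus of dimension at most zero (being smooth or a cone with a single vertex), which contradicts the singular line of $\mathcal{G}_{1,d}$ from Lemma~\ref{lem:Gsing}. Your version is slightly more explicit about why singular loci are preserved under the Veronese re-embedding and about the Del Pezzo--Bertini classification, while the paper additionally notes the alternative argument that a quadratic Veronese embedding contains no line at all.
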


\begin{proof}
We need to rule out case (1) in Theorem \ref{thm:terra}.
A rational normal surface is either a Hirzebruch surface or it is
the cone over a rational curve. The former is smooth
and the latter is singular at only one point. 
The same is true for the quadratic Veronese embedding of such a surface.
By contrast, our surface $\mathcal{G}_{1,d}$ is
singular along a line, by Lemma \ref{lem:Gsing}.
Alternatively, a quadratic Veronese embedding of a surface contains no line.
\end{proof}

Our goal is now to rule out case (2) in Theorem \ref{thm:terra}.
That proof will be much more involved.
 Our strategy is to set up a system of surfaces and morphisms
 between them, like this:
 \begin{equation}
 \label{eq:Sd}
\begin{array}{ccccc}
 S_d & \to  & \bar {S_d}&\subset&\PP^{N_d }\\
 \downarrow &   &\downarrow &&   \\
 \PP^2 &   & \mathcal{G}_{1,d}&\subset &\PP^d   
\end{array}
\end{equation}
The second row in (\ref{eq:Sd}) represents the rational map
$\phi : \PP^2  \dashrightarrow \mathcal{G}_{1,d}$
that is given by the maximal minors of $B_d$. 
Above $\PP^2$ sits a smooth surface $S_d$ which we shall
construct by a sequence of blow-ups from $\PP^2$. It will
have the property that $\phi$ lifts to a morphism on $S_d$.
Curves of degree $d$ in $\PP^2$ specify a divisor class $H_d$ on $S_d$.
The complete linear system $|H_d|$ maps $S_d$ onto
a rational surface  $\bar{S_d}$
in $\PP^{N_d}$ where $N_d = {\rm dim}(|H_d|)$.
The subsystem of $|H_d|$ given by the $d+1$ maximal 
minors of $B_d$, then defines the vertical map from
$\bar{S_d}$ onto $\mathcal{G}_{1,d}$. 
Our plan is to use the intersection theory on $S_d$ to 
rule out the possibility (2) in Theorem \ref{thm:terra}.

\begin{lemma}\label{decomp}
Suppose that we have a diagram as in (\ref{eq:Sd}) 
and $X = \mathcal{G}_{1,d}$ satisfies statement (2) in Theorem \ref{thm:terra}.
Then, for any $k$ general points $x_1,\ldots,x_k$ on the surface $S_d$, there exist 
linearly equivalent divisors $D_1 \ni x_1 ,\ldots,D_k \ni x_k$
and there exists a hyperplane section of 
$\,\mathcal{G}_{1,d} $ in $\PP^d$, with pullback $H_d$ to $S_d$, such that
$H_d - 2 D_1  - 2D_2- \cdots - 2D_k $ is effective on $S_d$.
\end{lemma}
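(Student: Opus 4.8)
The plan is to transport statement~(2) of Theorem~\ref{thm:terra} from the singular surface $\mathcal{G}_{1,d}$ up to the smooth surface $S_d$, where linear equivalence is available. Write $\psi\colon S_d \to \mathcal{G}_{1,d}\subset \PP^d$ for the composite morphism in~(\ref{eq:Sd}), namely the lift of $\phi$ followed by the projection $\bar{S_d}\to \mathcal{G}_{1,d}$. By construction $\psi$ is given by the subsystem of $|H_d|$ spanned by the maximal minors of $B_d$, so the pullback $\psi^*H$ of any hyperplane $H\subset \PP^d$ is a divisor linearly equivalent to $H_d$. The structural fact I would record first is that $S_d$, being an iterated blow-up of $\PP^2$, is a rational surface; hence algebraic equivalence of divisors on $S_d$ coincides with linear equivalence.

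Next I would exploit the genericity of the $x_i$. Since $\phi$, and therefore $\psi$, is birational onto $\mathcal{G}_{1,d}$ (Remark~\ref{rmk:familiar}), the images $\psi(x_1),\ldots,\psi(x_k)$ of $k$ general points of $S_d$ are $k$ general points of $X=\mathcal{G}_{1,d}$. Applying statement~(2) of Theorem~\ref{thm:terra} to these points yields a hyperplane $H$ whose section $H\cap X$ is tangent along a curve $C=C_1+\cdots+C_k$, where $\psi(x_i)\in C_i$ and the $C_i$ are algebraically equivalent on $X$.

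I would then set $D_i$ to be the strict transform of $C_i$ under $\psi$. Because $x_i$ is general it avoids the exceptional locus, so $\psi(x_i)\in C_i$ forces $x_i\in D_i$. The $D_i$ are strict transforms of the algebraically equivalent curves $C_i$, so they are again algebraically equivalent on $S_d$, and hence $D_1\sim\cdots\sim D_k$ by the rationality of $S_d$. Finally, tangency of $H$ along $C$ means that the linear form defining $H$ restricts to a function on $X$ vanishing to order at least two along each $C_i$; pulling back along $\psi$, the divisor $\psi^*H\in|H_d|$ contains $2D_1+\cdots+2D_k$. Since $\psi^*H\sim H_d$, this shows that $H_d-2D_1-\cdots-2D_k$ is effective, as claimed.

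The main obstacle is the bookkeeping in this last step. I must check that the order-two tangency along $C$ on the \emph{singular} surface $X$ genuinely pulls back to vanishing of order at least two along the strict transforms $D_i$, rather than being absorbed into exceptional curves contracted by $\psi$ or into the singular line $\mathcal{L}$ of Lemma~\ref{lem:Gsing}. Relatedly, passing from algebraic equivalence of the $C_i$ on $X$ to algebraic equivalence of their strict transforms $D_i$ on $S_d$ requires controlling the exceptional corrections (total transform minus strict transform) determined by how the $C_i$ meet the blow-up locus $Z_d$. Pinning down this interaction, and confirming that the residual divisor $\psi^*H-2\sum_i D_i$ remains effective, is where the genuine work lies; the equivalence and genericity assertions are then formal consequences of the rationality of $S_d$.
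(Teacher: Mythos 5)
Your proposal is correct and follows essentially the same route as the paper's proof: apply Theorem~\ref{thm:terra}(2) to the images of the $x_i$, pull the curves $C_i$ and the tangent hyperplane section back to $S_d$, use the rationality of $S_d$ to upgrade algebraic equivalence to linear equivalence, and read off effectivity of $H_d-2\sum_i D_i$ from the multiplicity-two tangency. The bookkeeping issues you flag at the end (strict versus total transform, absorption into exceptional or singular loci) are passed over just as quickly in the paper's own proof, which simply takes $D_i$ to be the preimage of $C_i$ and asserts that the pulled-back hyperplane section has multiplicity at least two along it.
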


 
\begin{proof}  By part (2) of Theorem \ref{thm:terra}, there exist algebraically equivalent 
curves $C_1,\ldots, C_k$ on $X$ 
that contain the images of the respective points $x_1,\ldots,x_k$, and there is
a hyperplane section $H_X$ of $X$ which contains and is singular along 
each $C_i$. Let $H\subset S_d$ be the preimage of $H_X$, and let $D_i\subset S_d$ be the preimage of $C_i$.  
Then $x_i \in D_i$ for $i=1,\ldots,k$. Furthermore,
the divisor $H$ has multiplicity at least $2$ along each $D_i$.  
Finally, since $S_d$ is a rational surface, linear and algebraic equivalence of divisors coincide,
and the lemma follows.
\end{proof}

We now construct the smooth surface $S_d$. Let $V_d$
denote the $(d+1)$-dimensional vector space spanned by
the maximal minors of the matrix  $B_d$ in (\ref{eq:hilbertburch}).
When $d$ is odd these minors~are
$$ \begin{matrix}
b_{d,0}&= & z^d,\\
b_{d,1}&= & yz^{d-1},\\
b_{d,2}&= & y^2z^{d-2}-xz^{d-1},\\
b_{d,3}&= & y^3z^{d-3}-3xyz^{d-2}, \smallskip \\
\cdots & \cdots & \cdots \quad \cdots \quad \cdots \quad \cdots \smallskip \\
b_{d,d-1}&= & y^{d-1}z-\binom{d-1}{2}xy^{d-3}z^2+ \ldots
+a_{(\frac{d-3}{2},d-1)}x^{\frac{d-3}{2}}y^2z^{\frac{d-1}{2}}+a_{(\frac{d-1}{2},d-1)}x^{\frac{d-1}{2}}z^{\frac{d+1}{2}}, \smallskip \\
b_{d,d}&= & y^d-\binom{d}{2}xy^{d-2}z+a_{(2,d)}x^2y^{d-4}z^2 +
\,\ldots \,+a_{(\frac{d-1}{2},d)}x^{\frac{d-1}{2}}yz^{\frac{d-1}{2}}.
\end{matrix}
$$
When $d$ is even, the maximal minors of the Hilbert-Burch matrix $B_d$  are
$$ \begin{matrix}
b_{d,0}&= & z^d,\\
b_{d,1}&= & yz^{d-1},\\
b_{d,2}&= & y^2z^{d-2}-xz^{d-1},\\
b_{d,3}&= & y^3z^{d-3}-3xyz^{d-2}, \smallskip \\
\cdots & \cdots & \cdots \quad \cdots \quad \cdots \quad \cdots \smallskip \\
b_{d,d-1}&= & y^{d-1}z-\binom{d-1}{2}xy^{d-3}z^2+ \ldots +a_{(\frac{d-4}{2},d-1)}x^{\frac{d-4}{2}}y^3z^{\frac{d-2}{2}}+a_{(\frac{d-2}{2},d-1)}x^{\frac{d-2}{2}}yz^{\frac{d}{2}}, \smallskip \\
b_{d,d}&= & y^d-\binom{d}{2}xy^{d-2}z+a_{(2,d)}x^2y^{d-4}z^2 + \,\ldots\,
 + a_{(\frac{d}{2},d)}x^{\frac{d}{2}}z^{\frac{d}{2}}.
\end{matrix}
$$
Here the $a_{(i,j)}$ are rational constants. The point $p=(1:0:0)$ is the
only  common zero of the forms $b_{d,0},\ldots, b_{d,d}$. All forms are
 singular at $p$, with the following lowest degree terms:
 \begin{equation}
 \label{eq:analysis1}
 \qquad z^d,\,yz^{d-1}, \,z^{d-1}, \, yz^{d-2}, \, \ldots \,,\,  z^{(d+1)/2},\,yz^{(d-1)/2}
\qquad \qquad \hbox{when $d$ is odd}; 
\end{equation}
\begin{equation}
\label{eq:analysis2}
  \qquad z^d,\,yz^{d-1}, \,z^{d-1}, \,yz^{d-2}, \, \ldots\, ,\,  yz^{d/2},\,z^{d/2}
 \qquad \qquad \hbox{when $d$ is even}. 
 \end{equation}
 Consider a general form in $V_d$. Then its lowest degree term at $p$ is a 
 linear combination of $z^{(d+1)/2}$ and $yz^{(d-1)/2}$ when $d$ is odd, 
 and it is a scalar multiple of $z^{d/2}$ when $d$ is even.
 
 The forms $b_{d,0},\ldots,b_{d,d}$
 define a morphism  $\phi: \PP^2 \backslash \{p\}\to \PP^d$ that does not extend to $p$.  
 Consider any map $\pi:S'\to \PP^2$ that is obtained by
 a sequence of blow-ups at smooth points, starting with the blow-up of $\PP^2$ at $p$.  
 Let $E \subset S'$ be the preimage of $p$.  The restriction of $\pi$ to  $S' \backslash E$ 
  is an isomorphism onto $\PP^2 \backslash \{p\}$, 
  and so $\phi$ naturally defines a morphism $S' \backslash E\to \PP^d$.  
  
 We now define our surface $S_d$ in (\ref{eq:Sd}).
It is a minimal surface $S'$  such that 
 $S' \backslash E\to \PP^d$ extends to a morphism $\tilde{\phi}:S'\to \PP^d$.
 Here ``minimal'' refers to the number of blow-ups, and we do not claim $S_d$ is the unique such minimal surface.

Let $H_d$ be the strict transform on $S_d$ of a curve in $\PP^2$ defined by a general form in $V_d$.  The complete linear system $|H_d|$ on $S_d$ defines a morphism $S_d\to \PP^{N_d}$, 
where $N_d = {\rm dim} |H_d|$.  Let $\bar{S_d}\subset \PP^N$ be the image.  Then $\tilde{\phi}:S_d\to \PP^d$ is the composition of $S_d\to \PP^N$ and a linear projection to $\PP^d$ whose restriction to $\bar{S_d}$ is finite. Thus we now have the diagram in (\ref{eq:Sd}). 

Relevant for proving Theorem~\ref{thm:main} are
the first two among the blow-ups that lead to $S_d$.
  The map $\phi$ is not defined at $p$.  More precisely, $\phi$ is undefined 
  at $p$ and at its tangent direction $\{z=0\}$.
 Let $S_p\to \PP^2$ be the blow-up at $p$, with exceptional divisor $E_p$.
  Let $S_{p,z}\to S_{p}$ be the  blow-up at the point on $E_p$ corresponding to the
   tangent direction $\{z=0\}$ at $p$,  with exceptional divisor $E_z$. 
 To obtain $S_d$ we need to blow up $S_{p,z}$ in $s$  further points for some $s$.
 
 Now, $S_d$ is a smooth rational surface.  Let $L$ be the class of a line pulled back to 
 $S_d$, and let $E_p,E_z,F_1,\ldots,F_s$,  be the classes of the exceptional divisors of 
 each blow-up,  pulled back to $S_d$. The divisor class group of $S_d$ 
 is the free abelian group with basis $L,E_p,E_z,F_1,\ldots ,F_s$.  
 The intersection pairing on this group is diagonal for this basis, with
  \begin{equation}
 \label{eq:pairing}
 L^2\,= \,-E_p^2 \,= \, -E_z^2 \, = \, -F_1^2 \, = \, \cdots \,= \, -F_s^2 \,\, = \,\, 1.
 \end{equation}
 The intersection of two curves on the smooth surface $S_d$, having 
 no common components, is a nonnegative integer.
 It is computed as the intersection pairing of their classes using (\ref{eq:pairing}).

 \begin{lemma} \label{lem:eleven}
 Consider the linear system $|H_d|$ on $S_d$ that represents 
 hyperplane sections of $ \mathcal{G}_{1,d} \subset \PP^d$,
 pulled back via the morphism $\tilde \phi$. Its class in the
 Picard group of $S_d$ is given by
 $$ \begin{matrix}
 H_d & =  & dL- \frac{d}{2} E_p- \frac{d}{2}E_z\,\,-\, c_1 F_1 -c_2 F_2 - \,\cdots\, - c_s F_s 
  & \hbox{when $d$ is even,} \smallskip \\
H_d & = & dL- \frac{d+1}{2}E_p- \frac{d-1}{2}E_z
\,-\, c_1 F_1 -c_2 F_2 - \,\cdots\, - c_s F_s &  \hbox{when $d$ is odd.}
\end{matrix}
$$
Here $c_1,c_2,\ldots,c_s$ are positive integers whose precise value will not matter to us.
\end{lemma}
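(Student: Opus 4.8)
The class $H_d$ is the strict transform of a general curve $\{f=0\}$ with $f\in V_d$, so in the orthogonal basis of (\ref{eq:pairing}) it must take the shape
$$ H_d \;=\; dL - m_0\,E_p - m_1\,E_z - \sum_{i=1}^{s} c_i\,F_i, $$
where $d=\deg f$ accounts for $L$, where $m_0=\operatorname{mult}_p(f)$, where $m_1$ is the multiplicity at the infinitely near point (the tangent direction $\{z=0\}$ blown up to create $E_z$) of the strict transform of $f$ after blowing up $p$, and where $c_i$ is the multiplicity of the successive strict transform at the center of the $i$th further blow-up. This is the standard formula for iterated blow-ups once every exceptional class is pulled back to $S_d$, so the whole task is to identify $m_0$, $m_1$ and to check $c_i>0$. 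The plan is to read off $m_0$ from the already-computed lowest-degree terms and to obtain $m_1$ from a single local chart computation, organized by a weighted-homogeneity of the Hilbert--Burch matrix.

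First, the coefficient of $L$ is $d$ because a general form in $V_d$ has degree $d$. The multiplicity $m_0$ is exactly the lowest degree occurring in a general member of $V_d$, which was determined right before the lemma: by (\ref{eq:analysis2}) the tangent cone is a scalar multiple of $z^{d/2}$ when $d$ is even, so $m_0=d/2$, while by (\ref{eq:analysis1}) it is $\alpha z^{(d+1)/2}+\beta y z^{(d-1)/2}$ with $\alpha,\beta$ general when $d$ is odd, so $m_0=(d+1)/2$. These are the stated coefficients of $E_p$.

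The computation of $m_1$ is the crux, and this is the step I expect to be the main obstacle. Here I would exploit that $B_d$ is homogeneous for the grading $\deg x=0,\ \deg y=1,\ \deg z=2$: assigning column $k$ the weight $k$ and row $j$ the weight $2-j$, each of the three nonzero diagonals $(j-1)x,\ y,\ z$ of $B_d$ acquires matching weight, so the maximal minor $b_{d,k}$ obtained by deleting column $k$ is weighted-homogeneous of weighted degree $\sum_{j=1}^d(2-j)+\sum_{k'\ne k}k' = 2d-k$. In particular $b_{d,d}$ is the unique minor of minimal weight $d$, and it is itself weighted-homogeneous of that weight; hence a general $f=\sum_k\lambda_k b_{d,k}$ has weighted order exactly $d$. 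Concretely, every monomial $x^cy^az^b$ of $f$ satisfies $a+2b\ge d$, with equality attained on $\{x=1\}$ (for instance at $z^{d/2}$ when $d$ is even, at $yz^{(d-1)/2}$ when $d$ is odd). Now blow up $p$ in the chart $z=yv$, whose exceptional divisor is $\{y=0\}$ and in which $v=0$ is the direction $\{z=0\}$; the strict transform is $\hat f(y,v)=y^{-m_0}f(y,yv)$, and a monomial $y^az^b$ of $f|_{\{x=1\}}$ contributes $y^{\,a+b-m_0}v^{\,b}$. Therefore
$$ m_1 \;=\; \operatorname{mult}_{(0,0)}\hat f \;=\; \min_{\text{terms of }f|_{\{x=1\}}}\!\bigl(a+2b\bigr)\,-\,m_0 \;=\; d-m_0, $$
which equals $d/2$ when $d$ is even and $(d-1)/2$ when $d$ is odd, as claimed.

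It remains to note that each $F_i$ is blown up at a base point of the successively transformed linear system, through which the general member passes, so $c_i\ge 1$ is a positive integer whose exact value is irrelevant. Assembling the coefficients yields the two displayed formulas for $H_d$. The one genuinely delicate point is the verification that the minimal weighted order $d$ is still achieved after restricting to $\{x=1\}$ and dividing by $y^{m_0}$, i.e.\ that no cancellation lowers $\operatorname{mult}_{(0,0)}\hat f$; this is guaranteed because the weight-$d$ part of $f$ is the single minor $\lambda_d b_{d,d}$ with $\lambda_d\ne 0$, and $b_{d,d}|_{\{x=1\}}$ is nonzero since it contains the monomial $y^d$.
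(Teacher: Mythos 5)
Your proof is correct and follows the same route as the paper: read the coefficient of $L$ from the degree, the coefficient of $E_p$ from the lowest-order terms in (\ref{eq:analysis1})--(\ref{eq:analysis2}), and the positivity of the $c_i$ from the fact that each further center is a base point of the transformed system. The paper's own proof is much terser --- it simply asserts that the first three coefficients are ``seen from'' that analysis --- so your weighted-homogeneity argument (weights $0,1,2$ on $x,y,z$ making $b_{d,k}$ homogeneous of weight $2d-k$, whence the multiplicity at the infinitely near point is $d-m_0$) is a clean, correct way of supplying the $E_z$ computation that the authors leave implicit.
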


\begin{proof} 
The forms in $V_d$ define the preimages in $\PP^2$ of curves in $|H_d|$.
The first three coefficients are seen from the analysis 
in (\ref{eq:analysis1}) and (\ref{eq:analysis2}). 
  The general hyperplane in $\PP^d$ intersects the image of the exceptional curve $F_i$ in 
  finitely many points. Their number is the coefficient $c_i$.
    \end{proof}

\begin{proof}[Proof of the first part of Theorem~\ref{thm:main}]
Suppose that $X = \mathcal{G}_{1,d}$ is $k$-defective for some $k$.
By Corollary \ref{maxk}, we may assume that $3k+2\geq d$.
By Corollary \ref{cor:statement2}~and Lemma \ref{decomp},
the class of the linear system $|H_d|$ in the Picard group of 
the smooth surface $S_d$ can be written as
$$H_d\,\,=\,\,A\,+\,2kD , $$
where $A$ is effective and $D$ is the class of a curve on $S_d$ that  has 
no fixed component. According to Lemma \ref{lem:eleven}, we can write
$$ D\,=\,aL-b_pE_p-b_zE_z-\sum_{i=1}^sc_i'F_i, $$
where $a=D\cdot L$ is a positive integer and 
$b_p, b_z,c'_1,\ldots,c'_s$ are nonnegative integers.

Assume first that  $a\geq 2$. We have the following chain of inequalities:
$$ 0 \,\leq \, L\cdot A \,=\, L \cdot H_d -  2k (L \cdot D)
\,=\, d - 2ka \,\leq\, d-4k \,\leq \, 2-k. $$
This implies $k \leq 2$. The case  $k=1$ being vacuous, we conclude
that $k = 2$ and hence $d\leq 8$.
If $d \leq 5$, then ${\rm Sec}_2(\mathcal{G}_{1,d}) = \PP^d$ is easily checked,
by computing the rank of the Jacobian of the parametrization.
For $d=6$, we know from \cite[Theorem 1]{CB} that
 ${\rm Sec}_2(\mathcal{G}_{1,6}) $ is a hypersurface of degree $39$ in $\PP^6$.
If $d \in \{7,8\}$, then the secant variety  ${\rm Sec}_2(\mathcal{G}_{1,d}) $ is
also $5$-dimensional, by the computation with cumulants in \cite[Proposition 13]{CB}.

Next, suppose $a=D\cdot L=1$.
The divisor $D$ is the strict transform on $S_d$ of a line in $\PP^2$.  
The multiplicity of this line at $p$ is at most $1$, i.e. $0\leq D\cdot E_p\leq 1$.  Furthermore,
 $D\cdot E_z=0$ because $D$ moves.
Suppose that $D\cdot E_p=0$ and $d$ is even. Then we have 
 $d\geq 4k$ because
$$ d/2\,= \, H_d \cdot E_p \,= A\cdot E_p\,\leq \,A\cdot L\,\leq \, d-2k. $$
Since $d\leq 3k+2$, this implies $k=2$ and $d=8$.
This case has already been ruled out above.
If $D\cdot E_p=0$ and $d$ is odd, then 
the same reasoning yields $(d+1)/2=A\cdot E_p \leq d-2k$.
 This implies $3k+2\geq d\geq 4k+1$, which is impossible for $k \geq 2$.

It remains to examine the case $D\cdot E_p=1$.
Here, any curve linearly equivalent to  $D$ on $S_d$  is the strict transform of a line in $\PP^2$ passing through $p=(1:0:0)$.  
Through a general point in the plane there is a unique such line, so it suffices to show that the doubling of any line through $p$ is not a component of any curve defined by  a linear combination of the $b_{d,i}$.  In particular, it suffices to show that  $y^2$ is not a factor of any form in the vector space $V_d$.

To see this, we note that no monomial $x^r y^s z^t$ appears in
more than one of the forms $b_{d,0}, b_{d,1}, \ldots,b_{d,d}$.
Hence, in order for $y^2$ to divide a linear combination of 
$b_{d,0}, b_{d,1}, \ldots,b_{d,d}$, it must already  divide one of the $b_{d,i}$. 
However, from the explicit expansions we see that $y^2$ is not a factor of
  $b_{d,i}$ for any $i$. This completes the proof of the first part 
    in Theorem~\ref{thm:main}.
\end{proof}

\section{Higher-dimensional Gaussians}

We begin with the general definition of the moment variety for Gaussian mixtures.
The coordinates on $\PP^N$ are the moments $m_{i_1 i_2 \cdots i_n}$.
The variety ${\rm Sec}_k (\mathcal{G}_{n,d})$ has the parametrization
\begin{equation}
\label{eq:gaussian}
\sum_{i_1,i_2,\ldots,i_n \geq 0}
\frac{m_{i_1 i_2 \cdots i_n}}{i_1 ! i_2 ! \cdots i_n !}
t_1^{i_1} t_2^{i_2} \cdots t_n^{i_n} \,\, =\,\,
\sum_{\ell = 1}^k \lambda_\ell \,\cdot\,
{\rm exp}(t_1 \mu_{\ell 1} + \cdots + t_n \mu_{\ell n}) \cdot
{\rm exp} \biggl( \frac{1}{2} \sum_{i,j=1}^n \sigma_{\ell ij} t_i t_j \! \biggr).
\end{equation}
This is a formal identity of generating functions in $n$ unknowns $t_1,\ldots,t_n$.
The model parameters are the $kn$ coordinates $\mu_{\ell i}$ of the mean vectors,
the $k \binom{n+1}{2}$ entries $\sigma_{\ell i j}$ of the covariance matrices,
and the $k$ mixture parameters $\lambda_\ell$. The latter 
satisfy $\lambda_1 + \cdots + \lambda_k = 1$.
This is a map from the space of model parameters into the
affine space $\mathbb{A}^{\! N}$ that sits inside $\PP^N$ as
$\{m_{00\cdots 0} = 1\}$.
We define ${\rm Sec}_k (\mathcal{G}_{n,d}) \subset \PP^N$ as the projective closure
of the image of this map.

\begin{remark}\rm  The affine Gaussian moment variety $\mathcal{G}_{n,d}\cap \mathbb{A}^{\! N}$ is isomorphic to an affine space (cf. \cite[Remark 6]{CB}).
In particular it is smooth.
Hence the singularities of $\mathcal{G}_{n,d}$ are all contained in the hyperplane at infinity.  This means that the definition of ${\rm Sec}_k (\mathcal{G}_{n,d})$ 
is equivalent to the usual definition of higher secant varieties: 
it is  the closure of the union of all
$(k-1)$-dimensional linear spaces that intersect $\mathcal{G}_{n,d}$ in $k$ distinct smooth points.
\end{remark}

In this section we focus on the case $d=3$, that is, we examine the 
varieties defined by first, second and third moments of Gaussian distributions.
The following is our main result.

\begin{theorem} \label{thm:mainsec2}
The moment variety $\mathcal{G}_{n,3}$ is $k$-defective for $ k \geq 2$.
 In particular, for $k=2$, the model
has two more parameters  than the dimension of the secant
variety, i.e.~$\,n(n + 3) + 1 \,-\, {\rm dim}\bigl({\rm Sec}_2(\mathcal{G}_{n,3})\bigr) = 2$.
If $n \geq 3$ and we fix  distinct first coordinates $\mu_{11}$ and $\mu_{21}$
for the two mean vectors, then the remaining parameters are identified uniquely.
In each of these statements, the parameter $k$ is assumed to be in the range where
${\rm Sec}_k(\mathcal{G}_{n,3})$  does not fill $\PP^N$.
\end{theorem}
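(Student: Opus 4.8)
The plan is to prove the defectivity of $\mathcal{G}_{n,3}$ by an explicit Terracini computation, and then to establish the sharper identifiability statement by analyzing the fibers of the parametrization.

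\medskip

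First I would compute the dimension of $\mathrm{Sec}_k(\mathcal{G}_{n,3})$ directly via Terracini's Lemma, rather than trying to apply the surface classification of Theorem~\ref{thm:terra} (which governs only $n=1$). By Terracini, $\dim \mathrm{Sec}_k(\mathcal{G}_{n,3})$ equals the dimension of the span of the tangent spaces to $\mathcal{G}_{n,3}$ at $k$ general points. So the key object is the tangent space to $\mathcal{G}_{n,3}$ at a single generic point. I would differentiate the parametrization in (\ref{eq:gaussian}), specialized to $d=3$ and $k=1$, with respect to the $n$ mean coordinates $\mu_i$ and the $\binom{n+1}{2}$ covariance entries $\sigma_{ij}$; the affine tangent cone (cone over the tangent space) is the column span of the resulting Jacobian of moments of order $\leq 3$. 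The point of restricting to $d=3$ is that exactly because only low-order moments enter, these tangent spaces satisfy an unexpected linear relation: I expect that the span of two general tangent spaces, which has expected affine dimension $2\cdot(\tfrac{n(n+3)}{2}+1)=n(n+3)+2$, actually drops by $2$. The cleanest way to exhibit this is to find two independent linear forms on $\PP^N$ vanishing on $\mathrm{Sec}_2(\mathcal{G}_{n,3})$ beyond what the count predicts—equivalently, two independent vectors in the common perpendicular of the two tangent spaces that are \emph{not} forced by genericity. I would look for these among the moment coordinates of order exactly $3$, where the covariance contributes only through the bilinear term $\mu_i\sigma_{jk}$-type expressions, producing a rank deficiency of precisely $2$.

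\medskip

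Once the defect is shown for $k=2$, the statement $n(n+3)+1-\dim(\mathrm{Sec}_2(\mathcal{G}_{n,3}))=2$ is immediate: the left side is the expected affine secant dimension minus the true one. The propagation to all $k\geq 2$ (within the range where $\mathrm{Sec}_k$ is proper) follows verbatim from Proposition~\ref{defect}: once one value of $k$ is defective, every larger $k$ below the filling threshold is defective as well. I would state this appeal explicitly to avoid re-deriving the increasing-defect inequality (\ref{eq:leftminusright}).

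\medskip

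The genuinely new content is the last sentence, the rational identifiability after rigidifying two parameters. Here the plan is as follows. The defect being exactly $2$ means the parameter map onto $\mathrm{Sec}_2$ has $2$-dimensional generic fibers, so two suitably chosen parameters can be fixed to cut the fiber down to dimension zero; the claim is that the correct choice is $\mu_{11}$ and $\mu_{21}$, the first mean coordinates, and that fixing them to distinct values makes the remaining recovery \emph{unique}, not merely finite. To prove this I would exhibit the $2$-dimensional fiber explicitly as an orbit of a concrete $2$-parameter family of symmetries acting on the $(\mu_\ell,\sigma_\ell,\lambda_\ell)$ that preserves all moments of order $\leq 3$; I expect this symmetry to be a shearing/translation in the first coordinate direction, whose effect on the two components is coupled so that once $\mu_{11},\mu_{21}$ are pinned (distinct, so the two components are genuinely separated), the symmetry is rigidified to the identity and the remaining parameters are determined. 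The main obstacle—and the step I would budget the most care for—is going from ``generically finite after fixing two coordinates'' to ``generically one-to-one'': I must show the residual fiber is a single point, which requires ruling out a nontrivial finite monodromy. I would do this by reconstructing the parameters coordinate-by-coordinate from the moments through a triangular system of equations (reading off $\lambda_\ell$, then the remaining $\mu_{\ell i}$, then the $\sigma_{\ell ij}$ rationally in terms of the moments and the fixed $\mu_{11},\mu_{21}$), so that each unknown is determined by a \emph{linear} (hence uniquely solvable) relation given the previously recovered ones, which forces the fiber to be a reduced single point and yields rational identifiability for $n\geq 3$.
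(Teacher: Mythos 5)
Your overall strategy (Terracini at two general points, then propagate to all $k\geq 2$ via Proposition~\ref{defect}, then analyze the fiber after pinning $\mu_{11},\mu_{21}$) is the right skeleton, and the propagation step and the reformulation of the defect as $n(n+3)+1-\dim$ are fine. But the proposal has a genuine gap at its center: the two unexpected linear relations on the span of the two tangent spaces are only \emph{asserted} to exist (``I expect that the span \ldots\ actually drops by $2$''), never exhibited. That is precisely the content of the theorem. The paper's proof supplies a concrete mechanism: for two parameter points $p=(\mu,\sigma)$, $p'=(\mu',\sigma')$ it introduces the rank-one matrix $\bar\sigma_{ij}=(\mu_i-\mu_i')(\mu_j-\mu_j')$ and shows that the three-dimensional family $(\mu'+s(\mu-\mu'),\,\sigma'+t(\sigma-\sigma')+u\bar\sigma)$ maps into the \emph{five}-dimensional affine span of the image of the plane $A(p,p')$, because the moments of order $\leq 3$ depend on $(s,t,u)$ only through $1,s,t,st,s^2+u,s^3+3su$. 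This single identity is what forces the defect to be at least $2$, and nothing in your sketch substitutes for it. Note also that your phrasing about linear forms ``vanishing on $\mathrm{Sec}_2(\mathcal{G}_{n,3})$'' conflates degeneracy of the secant variety with defectivity; the relevant forms vanish on the span of the two tangent spaces and depend on the chosen points. Separately, you give no argument that the defect is \emph{exactly} $2$ rather than larger; the paper needs a second, independent argument for the upper bound (a point $q$ whose family of secants is exactly two-dimensional, verified by a {\tt Macaulay2} computation for $n=3$ and a projection argument for $n\geq 4$).

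On the identifiability statement, your plan to recover the parameters by a triangular system in which ``each unknown is determined by a linear relation'' overclaims. In the paper's Example~\ref{s2G33}, after solving for $\lambda$, eliminating $\mu_{12},\mu_{13}$, and solving linearly for all covariances, one is left with a genuinely \emph{nonlinear} system in $\mu_{22},\mu_{23}$, whose unique rational solution is a computational fact, not a formal consequence of triangularity. That uniqueness is delicate: Remark~\ref{g23} shows the analogous fiber for $n=2$ consists of \emph{three} points, so any purely structural linearity argument that does not distinguish $n=2$ from $n\geq 3$ must be wrong. Finally, your sketch does not address the reduction from general $n\geq 4$ to $n=3$, which the paper handles by applying the $n=3$ recovery to every $3$-element index set containing $1$.
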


This proves the second part of Theorem~\ref{thm:main}.
We begin by studying the first interesting~case.

\begin{example} \label{s2G33} \rm
Let $n=d=3$ and $k=2$. In words, we 
consider moments up to order three for the mixture
of two Gaussians in $\RR^3$. This case is special because the number of
parameters coincides with the dimension of the ambient space:
$\,N= \frac{1}{2} k n(n+3) + k - 1 = 19$. The variety ${\rm Sec}_2 (\mathcal{G}_{3,3})$ 
is the closure of the image of the map $\mathbb{A}^{\! 19} \rightarrow \PP^{19}$
that is given by (\ref{eq:gaussian}):
$$
\begin{matrix}
m_{100} & = &  \lambda \mu_{11} +(1-\lambda) \mu_{21} \\
m_{010} & = &  \lambda \mu_{12} +(1-\lambda) \mu_{22} \\
m_{001} & = &  \lambda \mu_{13} +(1-\lambda) \mu_{23}  \\
m_{200} & = &  \lambda (\mu_{11}^2+\sigma_{111})+(1-\lambda) (\mu_{21}^2+\sigma_{211}) \\
m_{020} & = &  \lambda (\mu_{12}^2+\sigma_{122})+(1-\lambda) (\mu_{22}^2+\sigma_{222})  \\
m_{002} & = &  \lambda (\mu_{13}^2+\sigma_{133})+(1-\lambda) (\mu_{23}^2+\sigma_{233}) \\
m_{110} & = &  \lambda (\mu_{11} \mu_{12}+\sigma_{112})
               +(1-\lambda) (\mu_{21} \mu_{22}+\sigma_{212}) \\
m_{101} & = &  \lambda (\mu_{11} \mu_{13}+\sigma_{113})
               +(1-\lambda) (\mu_{21} \mu_{23}+\sigma_{213}) \\
m_{011} & = &  \lambda (\mu_{12} \mu_{13}+\sigma_{123})
               +(1-\lambda) (\mu_{22} \mu_{23}+\sigma_{223}) \\
m_{300} & = &  \lambda (\mu_{11}^3+3 \sigma_{111} \mu_{11})
               +(1-\lambda) (\mu_{21}^3+3 \sigma_{211} \mu_{21}) \\
m_{030} & = &  \lambda (\mu_{12}^3+3 \sigma_{122} \mu_{12})
               +(1-\lambda) (\mu_{22}^3+3 \sigma_{222} \mu_{22}) \\
m_{003} & = &  \lambda (\mu_{13}^3+3 \sigma_{133} \mu_{13})
               +(1-\lambda) (\mu_{23}^3+3 \sigma_{233} \mu_{23}) \\
m_{210} & = &  \lambda (\mu_{11}^2 \mu_{12}+\sigma_{111} \mu_{12}+2 \sigma_{112} \mu_{11})
          +(1-\lambda) (\mu_{21}^2 \mu_{22}+\sigma_{211} \mu_{22}+2 \sigma_{212} \mu_{21}) \\
m_{201} & = &  \lambda (\mu_{11}^2 \mu_{13}+\sigma_{111} \mu_{13}+2 \sigma_{113} \mu_{11})
          +(1-\lambda) (\mu_{21}^2 \mu_{23}+\sigma_{211} \mu_{23}+2 \sigma_{213} \mu_{21}) \\
m_{120} & = &  \lambda (\mu_{11} \mu_{12}^2+\sigma_{122} \mu_{11}+2 \sigma_{112} \mu_{12})
          +(1-\lambda) (\mu_{21} \mu_{22}^2+\sigma_{222} \mu_{21}+2 \sigma_{212} \mu_{22}) \\
 m_{102} & = &  \lambda (\mu_{11} \mu_{13}^2+\sigma_{133} \mu_{11}+2 \sigma_{113} \mu_{13})
          +(1-\lambda) (\mu_{21} \mu_{23}^2+\sigma_{233} \mu_{21}+2 \sigma_{213} \mu_{23}) \\
m_{021} & = &  \lambda (\mu_{12}^2 \mu_{13}+\sigma_{122} \mu_{13}+2 \sigma_{123} \mu_{12})
          +(1-\lambda) (\mu_{22}^2 \mu_{23}+\sigma_{222} \mu_{23}+2 \sigma_{223} \mu_{22}) \\
m_{012} & = &  \lambda (\mu_{12} \mu_{13}^2+\sigma_{133} \mu_{12}+2 \sigma_{123} \mu_{13})
          +(1-\lambda) (\mu_{22} \mu_{23}^2+\sigma_{233} \mu_{22}+2 \sigma_{223} \mu_{23}) \\
m_{111} & = &  \lambda\, (\mu_{11} \mu_{12} \mu_{13}
              + \sigma_{112} \mu_{13} + \sigma_{113} \mu_{12} + \sigma_{123} \mu_{11})
                \qquad \qquad\        \\ & & 
          +\,(1 - \lambda) (\mu_{21} \mu_{22} \mu_{23}
                        +\sigma_{212} \mu_{23}+\sigma_{213} \mu_{22}+\sigma_{223} \mu_{21}) \qquad \qquad
                        \qquad \quad\,\,
\end{matrix}
$$
A direct computation shows that the $19 \times 19$-Jacobian matrix
of this map has rank $17$ for generic parameter values.
Hence the dimension of ${\rm Sec}_2(\mathcal{G}_{3,3})$ 
equals $17$. This is two less than the expected dimension of $19$.
We have here identified the smallest instance of defectivity.

Let $m = (m_{ijk}) $ be a valid vector of moments.
Thus $m$ is a point in ${\rm Sec}_2(\mathcal{G}_{3,3})$.
We assume that $m \not\in \mathcal{G}_{3,3}$.
Choose arbitrary but distinct complex numbers for $\mu_{11} $ and $\mu_{21}$,
while the other $17$ model parameters remain unknowns.
We note that, if $\mu_{11} = \mu_{21}$, then 
$m_{300}=3m_{100}m_{200}-2m_{100}^3$.
This is not satisfied for a general choice of 19 model parameters.

What we see above is a system of $19$ 
polynomial equations in $17$ unknowns. We claim 
that this system has a unique solution over $\mathbb{C}$.
Hence, if $\mu_{11},\mu_{21} \in \mathbb{Q}$ and 
  the left hand side vector $m$ has its coordinates in $\mathbb{Q}$, then that
  unique solution has its coordinates in $\mathbb{Q}$.

By solving the first equation, we obtain the mixture parameter $\lambda$.
From the second and third equation we can eliminate $\mu_{12}$ and $\mu_{13}$.
Next, we observe that all $12$ covariances $\sigma_{ijk}$ appear linearly
in our equations, so we can solve for these as well. 
We are left with a system of truly non-linear equations in only two unknowns,
$\mu_{22}$ and $\mu_{23}$. A direct computation now reveals that this
system has a unique solution that is a rational expression in the given $m_{ijk}$.

Our computational argument therefore shows that each general fiber of the
natural parametrization of ${\rm Sec}_2(\mathcal{G}_{3,3})$
is birational to the affine plane $\mathbb{A}^2$ 
whose coordinates are $\mu_{11}$ and $\mu_{21}$.
This establishes  Theorem  \ref{thm:mainsec2} for the special case 
of trivariate Gaussians ($n=3$).
\end{example}

\begin{remark}\label{g23} \rm
The second assertion in Theorem  \ref{thm:mainsec2} holds
for $n = 2$ because there are $11$ parameters and ${\rm Sec}_2(\mathcal{G}_{2,3}) = \PP^9$.
However, the third assertion is not true for $n=2$ because
the general fiber of the  parametrization map $\mathbb{A}^{11} \rightarrow \PP^9$
is the union of three irreducible components. When $\mu_{11}$ and $\mu_{21}$
are fixed, then the fiber consists of three points and not one.
\end{remark}

\begin{proof}[Proof of Theorem \ref{thm:mainsec2}]
Suppose  $n \geq 4$ and let $m \in  {\rm Sec}_2(\mathcal{G}_{n,3})
\backslash \mathcal{G}_{n,3}$. Each moment $m_{i_1 i_2 \cdots i_n}$
has at most three non-zero indices. Hence, its expression in the model
parameters involves at most three coordinates of the mean vectors
and a block of size at most three in the covariance matrices.
Let $\mu_{11}$ and $\mu_{21}$ be arbitrary distinct complex numbers.
Then we can apply the rational solution in Example~\ref{s2G33} for any $3$-element subset
of $\{1,2,\ldots,n\}$ that contains $1$. This leads to unique expressions
for all model parameters in terms of the moments $m_{i_1 i_2 \cdots i_n}$.
In this manner,  at most one system of parameters is recovered.
Hence the third sentence in Theorem \ref{thm:mainsec2} is implied by the first two sentences.
It is these two we shall now prove.

In the affine space $\mathbb{A}^{\! N}= \{m_{000}=1\} \subset \mathbb{P}^N$,
we consider the affine moment variety $G_n^A:=\mathcal{G}_{n,3}\cap 
\mathbb{A}^{\! N}$. This has dimension $M=\frac{1}{2}n(n+3)$.
  The  map from (\ref{eq:gaussian}) that parametrizes the Gaussian moments is denoted
  $ \rho: \mathbb{A}^{\! M}\to \mathbb{A}^{\! N}$.
It is an isomorphism onto its image $G_n^A$.  

 Fix two points $p=(\mu,\sigma)$ and $p'=(\mu',\sigma')$ in $ \mathbb{A}^{\! M}$.
They determine  the affine plane
$$A(p,p')\,\,=\,\,\bigl\{\,(s\mu+(1-s)\mu',\,t\sigma+(1-t)\sigma')\,\,|\,\, s,t\in \RR\, \bigr\} \,\,\subset\,\,
\mathbb{A}^{\! M}.$$
Its image $\rho(A(p,p'))$ is a surface in $G_n^A \subset \mathbb{A}^{\! N}$.   
The restrictions $m_{i_1 \ldots i_n}(s,t)$  of the moments  to this surface are
 polynomials in $s,t$ with coefficients that depend on the points $p,p'$.  
 Since $i_1+ \cdots +i_n \leq 3$, every moment
$m_{i_1 \ldots i_n}(s,t)$ is a linear combination of the monomials $1,s,t,st,s^2,s^3$.
Linearly eliminating these monomials, we
obtain $N{-}5$ linear relations among the moments when restricted to the plane $A(p,p')$.  These relations define the affine span
of the surface $\rho(A(p,p'))$. This affine space is therefore
  $5$-dimensional. We denote it by $\mathbb{A}^5_{p,p'}$. 
  
  The monomials $(b_1,b_2,b_3,b_4,b_5) = (s,t,st,s^2,s^3)$ serve as coordinates
  on  $\mathbb{A}^5_{p,p'}$, modulo the affine-linear relations that define  $\mathbb{A}^5_{p,p'}$, 
       The image surface $\rho(A(p,p'))$ is therefore contained in the subvariety 
  of $\mathbb{A}^5_{p,p'}$ that is defined by the $2\times 2$-minors of the $ 2 \times 4$-matrix
  \begin{equation}
  \label{eq:twobyfour1}
  \begin{pmatrix}
  1 & b_2 & b_1  & b_4 \\   b_1 & b_3  & b_4 & b_5
  \end{pmatrix} \,\, = \,\, 
  \begin{pmatrix}
  1 & t & s & s^2 \\  s & st  & s^2 & s^3
  \end{pmatrix}.
  \end{equation}
      This variety is an irreducible surface,
  namely a  scroll of degree $4$. It hence equals $\rho(A(p,p'))$.
  
Let $\bar{\sigma}$ denote the covariance matrix with entries
$\, \bar{\sigma}_{ij}=(\mu_i-\mu_i')(\mu_j-\mu_j')$.
  We define
$$\mathbb{A}^3_{p,p'}  \,\,=\,\,\bigl\{\,(\mu'+s(\mu - \mu'),\,\sigma' +t(\sigma - \sigma') + u \bar{\sigma}) \,\,|\,\, s,t,u\in \RR\, \bigr\}. $$
Setting $u=0$ shows that this $3$-space contains the plane $A(p,p')$. We claim that
\begin{equation}
\label{eq:keyclaim}
\rho(\mathbb{A}^3_{p,p'}) \,\,\subseteq \,\,\mathbb{A}^5_{p,p'}.
\end{equation}
On the image $\rho(\mathbb{A}^3_{p,p'})$, each moment
is a linear combination of the eight monomials $1,s,s^2,s^3,t,st,u,su$. 
A key observation is that,
 by our choice of $\bar{\sigma}$, these expressions are actually linear combinations of the six
 expressions $1,s,s^2 {+}u,s^3{+}3su,t,st$.
 Indeed, the coefficient of $s^2$ in the expansion of
$(\mu'_i+s(\mu_i - \mu'_i))(\mu'_j+s(\mu_j - \mu'_j))$
matches the coefficient $ \bar{\sigma}_{ij}$ of $u$ in the expansion of second order moments. 
Likewise,  $s^2$ and $u$ have equal coefficients in the third order moments.
Analogously, the coefficient  of the monomial $s^3$ in the expansion~of 
$$(\mu'_i+s(\mu_i - \mu'_i))(\mu'_j+s(\mu_j - \mu'_j))(\mu'_k+s(\mu_k - \mu'_k))$$ 
is $(\mu_i-\mu'_i) \bar{\sigma}_{jk} = (\mu_j-\mu'_j) \bar{\sigma}_{ik} = (\mu_k-\mu'_k) \bar{\sigma}_{ij},$
which coincides with the corresponding coefficient of $3su$ in the expansion of third order moments.
From this we conclude that (\ref{eq:keyclaim}) holds.

Since $\rho$ is birational, $\rho(\mathbb{A}^3_{p,p'})$ is a threefold in $\mathbb{A}^5_{p,p'}$. 
Since $p$ and $p'$ are arbitrary, these threefolds cover $G^A_n$.
Through any point outside  $\rho(\mathbb{A}^3_{p,p'})$
 there is a $2$-dimensional family of secant lines to
$\rho(\mathbb{A}^3_{p,p'})$. The same holds for $G^A_n$. Hence
the $2$-defectivity of $\mathcal{G}_{n,3}$  is at least~two.

To see that it is at most two, it suffices to find a point $q$ in
${\rm Sec}_2(\mathcal{G}_{n,3})$ such that the variety of secant lines
to $\mathcal{G}_{n,3}$ through $q$ is $2$-dimensional.
Let $\mathcal{G}_{2,3}(1,2)$ denote the subvariety
of $\mathcal{G}_{n,3}$ defined  by setting all parameters other than
$\mu_1,\mu_2,\sigma_{11}, \sigma_{12},\sigma_{22}$ to zero.
The span of $\mathcal{G}_{2,3}(1,2) \cap \mathbb{A}^N$ is an affine
 $9$-space $\mathbb{A}^9(1,2)$ inside $\mathbb{A}^N$.
Consider a general point $q\in \mathbb{A}^9(1,2)$. Then $ q \not\in G^A_n$.
We claim that any secant to $G^A_n$ through $q$ is contained in
$\mathbb{A}^9(1,2)$. 

A computation with {\tt Macaulay2} \cite {MAC2}  shows
that this is the case when $n = 3$. Explicitly, if $q$ is any point whose
moment coordinates vanish except those that involve only
$\mu_1,\mu_2,\sigma_{11}, \sigma_{12},\sigma_{22}$,
then $\mu_3=\sigma_{13}=\sigma_{23}=\sigma_{33}=0$.
Suppose now $n \geq 4$. Assume there exists a secant line through $q$
that is not contained in $\mathbb{A}^9(1,2)$. Then we can find indices
$1,2,k$ such that the projection of that secant
 passes through the span of the corresponding
 $G^A_3\subset G^A_n$.
 In each case, the secant lands in $\mathbb{A}^9(1,2)$,
so it must already lie in this subspace before any of
the projections. This argument proves the claim.

 In conclusion,
  we have shown that the $2$-defectivity of the third order
 Gaussian moment variety $\mathcal{G}_{n,3}$  is precisely two.
 This completes the proof of Theorem~\ref{thm:mainsec2}.
 \end{proof}
 
 We offer some remarks on the geometry underlying the proof of Theorem~\ref{thm:mainsec2}, or more precisely, 
on the $2$-dimensional family of secant lines 
 through a general point $q$ on the affine secant variety  ${\rm Sec}_2(G_n^A)$.
 The {\em entry locus} $\Sigma_q$ is
the closure  of the set of points $p\in G_n^A$ such that 
$q$ lies on a secant line through $p$.    This entry locus is therefore a surface.  We identify the Zariski closure of this surface in $\PP^N$.
\begin{proposition}
The Zariski closure in $\mathcal{G}_{n,3}$ of the entry locus $\Sigma_q$ of a general point $q\in{\rm Sec}_2(G_n^A)$ is 
 the projection of a Del Pezzo surface of degree $6$ into $\PP^5$ that is singular along a line in the hyperplane at infinity.
\end{proposition}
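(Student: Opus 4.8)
The plan is to reduce the statement to the explicit threefold that already appeared in the proof of Theorem~\ref{thm:mainsec2}, and then to analyze the entry locus by hand inside it. For every $n$, the analysis there shows that the two entry points of any secant of $G_n^A$ through the general point $q$ lie on a common threefold $\rho(\mathbb{A}^3_{p,p'})$, which spans the $5$-space $\mathbb{A}^5_{p,p'}$. Hence $\Sigma_q$ is contained in $\rho(\mathbb{A}^3_{p,p'})$ and its closure lies in $\PP^5=\overline{\mathbb{A}^5_{p,p'}}$; from now on I work in this $\PP^5$, with coordinates $w_0,\dots,w_5$ given by $1,s,s^2{+}u,s^3{+}3su,t,st$ as in the proof. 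First I would record the homogeneous ideal of $\overline{\rho(\mathbb{A}^3_{p,p'})}$: from the relations $w_3=3w_1w_2-2w_1^3$ and $w_5=w_1w_4$ it is the complete intersection of the cubic $\mathcal{C}=\{w_0^2w_3-3w_0w_1w_2+2w_1^3=0\}$ — which is precisely the Gaussian moment surface equation of Proposition~\ref{prop:surface} read in $w_0,w_1,w_2,w_3$ — and the quadric $\mathcal{Q}=\{w_0w_5-w_1w_4=0\}$. Thus $\overline{\rho(\mathbb{A}^3_{p,p'})}$ is a degree-$6$ threefold containing $\Sigma_q$.

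The geometric key is the quadric $\mathcal{Q}$. Since $w_2,w_3$ do not occur in it, $\mathcal{Q}$ has rank $4$: it is the cone over the Segre quadric $\PP^1\times\PP^1\subset\PP^3_{w_0w_1w_4w_5}$ with vertex the line $\ell_\infty=\{w_0=w_1=w_4=w_5=0\}$, which lies in the hyperplane at infinity $\{w_0=0\}$. One checks $\ell_\infty\subset\mathcal{C}$, so $\ell_\infty$ lies on the threefold and in its singular locus; this will be the asserted singular line. I would then describe $\Sigma_q$ through the Galois involution of the projection of $\mathcal{Q}$ away from $q$: for $P\in\mathcal{Q}$ the line $\overline{qP}$ meets $\mathcal{Q}$ again at $\iota_q(P)=\mathcal{Q}(q)\,P-2\,B(q,P)\,q$, where $B$ is the polarization of $\mathcal{Q}$, so $\iota_q$ is a \emph{linear} harmonic homology $\sigma$ with centre $q$ and axis the polar hyperplane of $q$. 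A point $P$ lies in the entry locus exactly when $P$ and $\sigma(P)$ both lie on $\mathcal{C}$, hence $\overline{\Sigma_q}$ is the residual surface in $\mathcal{Q}\cap\mathcal{C}\cap\sigma(\mathcal{C})$. Because $B(q,P)=0$ for every $P\in\ell_\infty$, the homology $\sigma$ fixes $\ell_\infty$ pointwise; thus the two entry points of a secant collide along $\ell_\infty$, which is exactly why $\overline{\Sigma_q}$ is doubly singular there.

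To pin down the surface I would produce an explicit rational parametrization. The equation of $\Sigma_q$ in the parameters $(s,t,u)$, obtained by imposing $\iota_q(P)\in\mathcal{C}$, turns out to be linear in $u$, with $u$-coefficient a nonzero multiple of $(q_4-t)$; one may therefore solve $u=U(s,t)$ as a rational function and obtain a birational parametrization $(s,t)\mapsto[\,1:s:s^2{+}U:s^3{+}3sU:t:st\,]$. Clearing denominators exhibits $\overline{\Sigma_q}$ as the image of a linear system on a rational surface, and computing the self-intersection of the hyperplane class after blowing up the base locus gives degree $6$. Finally I would identify the minimal resolution with the degree-$6$ del Pezzo surface, the blow-up of $\PP^2$ at three points, and match $|H|$ with $-K$ followed by a projection: the pole locus $\{t=q_4\}$ of $U$ maps onto $\ell_\infty$, its preimage on the del Pezzo is a conic $C$, and the embedding into $\PP^5$ factors as the anticanonical embedding into $\PP^6$ followed by projection from a point of the plane $\langle C\rangle$, which folds $C$ two-to-one onto the line $\ell_\infty$.

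The main obstacle is this last identification: verifying that the normalization of $\overline{\Sigma_q}$ is a \emph{smooth} del Pezzo of degree $6$ and that $|H|$ is genuinely the projected anticanonical system. I expect to settle it by adjunction — computing $K^2=6$ and the classes of the $(-1)$-curves on the resolution, equivalently locating the three blown-up points of $\PP^2$ explicitly — backed by a \texttt{Macaulay2} check of the degree, of the singular line $\ell_\infty$, and of the conic $C$ lying over it, which together confirm the classification.
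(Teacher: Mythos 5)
Your reduction to the threefold $T=\overline{\rho(\mathbb{A}^3_{p,p'})}$ and your observation that the secant involution on the rank-$4$ quadric $\mathcal{Q}$ is a harmonic homology fixing the vertex line $\ell_\infty$ pointwise are attractive; the latter does explain geometrically why the entry locus should be singular along a line at infinity, and it is a genuinely different viewpoint from the paper's. But there are concrete gaps. First, $T$ is \emph{not} the complete intersection $\mathcal{C}\cap\mathcal{Q}$: it is a \emph{quartic} threefold scroll (the projection to $\PP^5$ of the minimal-degree scroll $X_{p,p'}\subset\PP^6$ cut out by the $2\times2$ minors of the paper's $2\times4$ matrix), whereas $\mathcal{C}\cap\mathcal{Q}$ has degree $6$ and contains the extra component $\{w_0=w_1=0\}\cong\PP^3$ at infinity (restricting $\mathcal{C}$ to $\{w_0=0\}$ leaves only $2w_1^3$). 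Your two relations cut out $T$ only on the affine chart $\{w_0\neq0\}$. This error propagates: your argument that $\ell_\infty$ lies on the threefold, deduced from $\ell_\infty\subset\mathcal{C}\cap\mathcal{Q}$, is invalid because $\ell_\infty$ sits inside the spurious $\PP^3$; and ``the residual surface in $\mathcal{Q}\cap\mathcal{C}\cap\sigma(\mathcal{C})$'' is a degree-$18$ intersection that you never disentangle --- you do not determine which components to discard nor show that what remains has degree $6$. Second, you take for granted that every secant of $G_n^A$ through $q$ is a secant of the single threefold $\rho(\mathbb{A}^3_{p,p'})$; this is precisely the irreducibility of the $2$-dimensional family of secants through $q$, which the paper must establish first (via the explicit fiber computation of Example~\ref{s2G33} and a projection to $G_3^A$) before it can identify $\Sigma_q$ with the double point locus of $\pi_q$ on that threefold.

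More fundamentally, the two assertions that constitute the proposition --- degree $6$ and the del Pezzo structure --- are deferred to computations you do not perform (``turns out to be linear in $u$'', ``gives degree $6$'', ``I expect to settle it by adjunction \dots backed by a \texttt{Macaulay2} check''). The linearity in $u$ is in fact correct, since $\mathcal{Q}$ involves only $w_0,w_1,w_4,w_5$ and $\mathcal{C}$ has degree one in each of $w_2,w_3$; but a direct computation gives the $u$-coefficient of $\mathcal{C}(\iota_q(P))$ as $6\,\mathcal{Q}(q)\,B(q,P)\bigl(\mathcal{Q}(q)-2B(q,P)q_0\bigr)(q_1-q_0s)$, which is not a multiple of $(q_4-t)$, so your description of the pole locus and of the conic lying over $\ell_\infty$ would have to be redone. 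The paper obtains both conclusions in one stroke: it realizes $\Sigma_q$ as the double point locus of the projection $\tilde{\pi}\colon X_{p,p'}\to\PP^4$ of the quartic scroll and invokes the double point formula \cite[Theorem 9.3]{Ful}, which directly delivers a degree-$6$ surface anticanonically embedded in $\PP^6$, i.e.\ a del Pezzo; the singular line then comes from the double point conic of the intermediate projection $\bar{\pi}\colon X_{p,p'}\to\PP^5$, a conic in the plane $\{a_0=a_1=a_2=a_3=0\}$ at infinity mapping $2:1$ onto $\ell_\infty$. If you wish to keep your explicit route, you must replace $\mathcal{C}\cap\mathcal{Q}$ by the actual homogeneous ideal of the quartic scroll and then genuinely carry out the degree and adjunction computations on a resolution of your parametrization.
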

 \begin{proof} 
 According to Example \ref {s2G33}, 
 the $2$-dimensional family of secant lines through a general point 
 $q\in{\rm Sec}_2(G_3^A)$ 
 is irreducible and birational to the affine plane. If we consider $G^A_3$ as a subvariety of $G^A_n$ and $q\in{\rm Sec}_2(G_3^A)$, then we may argue as in the proof of Theorem \ref{thm:mainsec2} that any secant line to $G^A_n$ through $q$, is a secant line to $G^A_3$.  We conclude that the $2$-dimensional family of secant lines through a general point 
 $q\in{\rm Sec}_2(G_n^A)$ is irreducible.
 

On the other hand, if $q$ is on the secant spanned by
$p,p' \in G_n^A$, then, in the notation of the proof of  Theorem \ref{thm:mainsec2}, the point $q$ lies in $\mathbb{A}^5_{p,p'}$.
There is a $2$-dimensional family of secant lines to $\rho(\mathbb{A}^3_{p,p'})$ through $q$.
This family must coincide with the family of secant lines to $G_n^A$ through $q$.
The entry locus $\Sigma_q$ therefore equals the double point locus of the projection 
$$\pi_q: \rho(\mathbb{A}^3_{p,p'})\to \mathbb{A}^4 $$ from the point $q$.  
We shall identify this double point locus as a surface of degree $6$. In fact, 
its Zariski closure in $\PP^5$ is the projection of a Del Pezzo surface of degree $6$ from $\PP^6$.

 Consider the maps
$$
\begin{matrix}
\tau\,:\,\mathbb{A}^3_{p,p'}\to \mathbb{A}^6 & : & \quad (s,t,u) & \mapsto & (s,t,st,s^2,s^3+3su, u) , \\
\pi\,:\,\mathbb{A}^6\to \mathbb{A}^5_{p,p'} & : &\quad (a_1,\ldots,a_6) & \mapsto& (a_1,a_2,a_3,a_4+a_6,a_5). 
\end{matrix}
$$
The image $\tau(\mathbb{A}^3_{p,p'})$ in $\mathbb{A}^6$ is the $3$-fold scroll defined by the $2\times 2$ minors of the matrix
\begin{equation}
  \label{eq:twobyfour2}
  \begin{pmatrix}1&a_2&a_1&a_4+3a_6\\
  a_1&a_3&a_4&a_5
  \end{pmatrix}.
  \end{equation}
  
The composition $\pi \circ \tau $ is the restriction of $\rho$ to $\mathbb{A}^3_{p,p'}$.
Hence $\rho(\mathbb{A}^3_{p,p'})$ is also a {\em quartic threefold scroll}.
To find its equations  in  $\mathbb{A}^5_{p,p'}$, we 
set $a_4 = b_4-a_6$ and $a_i = b_i$ for $i \in \{1,2,3,5\}$, and then we
 eliminate $a_6$ from the ideal of $2 \times 2$-minors of
 (\ref{eq:twobyfour2}). The result is the system
$$ b_1b_2-b_3 \,= \, 2b_1b_3^2+b_2^2b_5-3b_2b_3b_4 \,=\, 
2b_1^2b_3+b_2b_5-3b_3b_4 \, = \, 2b_1^3-3b_1b_4      +b_5 \,\,\, = \,\,\, 0. $$

Let  $X_{p,p'}$ be the Zariski  closure of $\tau(\mathbb{A}^3_{p,p'})$ in $ \PP^6$.  It is a threefold quartic scroll, defined by the
  $2\times 2$ minors of the matrix
\begin{equation}
  \label{eq:twobyfour2}
  \begin{pmatrix}a_0&a_2&a_1&a_4+3a_6\\
  a_1&a_3&a_4&a_5
  \end{pmatrix}.
  \end{equation}
The projection $\pi$, and the composition of $\pi$ and the projection $\pi_q$ from the point $q\in \mathbb{A}^5_{p,p'}$, extend to  projections  
$$\bar{\pi}:X_{p,p'}\to \PP^5\qquad \hbox{and} \qquad \tilde{\pi}:X_{p,p'}\to \PP^4.$$
By the double point formula \cite[Theorem 9.3]{Ful}, the double point locus $\Sigma_{\tilde{\pi}}\subset X_{p,p'}$ of $\tilde{\pi}$ is a surface of degree $6$ anticanonically embedded in $\PP^6$.  This is the desired Del Pezzo surface.  

Similarly, the double point locus of $\bar{\pi}$ is a plane conic curve in $X_{p,p'}$, that is mapped $2\colon 1$ onto a line in $\PP^5$.  The plane conic curve is certainly contained in the double point locus $\Sigma_{\tilde{\pi}}$, so  $\bar{\pi}(\Sigma_{\tilde{\pi}})\subset \PP^5$ is singular along a line.
In the above coordinates, the conic is the intersection of $X_{p,p'}$ with the plane defined by $a_0=a_1=a_2=a_3=0 $, i.e. a conic in the hyperplane $\{a_0=0\}$ at infinity.  
The entry locus $\Sigma_q$ is clearly contained in $\bar{\pi}(\Sigma_{\tilde{\pi}})$.  In fact, the latter is the Zariski closure of the former in $\PP^5$ and the proposition follows.
\end{proof}


We now come to the higher secant varieties of the Gaussian moment variety $\mathcal{G}_{n,3}$.

\begin{corollary} \label{cor:threedefective}
Let $k \geq 2$ and  $n \geq 3k-3$. Then $\mathcal{G}_{n,3}$ is $k$-defective.
\end{corollary}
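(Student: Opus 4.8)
The plan is to deduce this corollary from the already-established case $k=2$ by propagating defectivity upward with Proposition~\ref{defect}. That proposition requires only that ${\rm Sec}_k(\mathcal{G}_{n,3})$ be a \emph{proper} subvariety of $\PP^N$, so the whole task reduces to verifying properness under the hypothesis $n \geq 3k-3$, after which $k$-defectivity comes for free from the $2$-defectivity proved in Theorem~\ref{thm:mainsec2}.

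Write $M = \dim \mathcal{G}_{n,3} = n(n+3)/2$. First I would record the two inputs. From Theorem~\ref{thm:mainsec2}, for $n \geq 3$ the variety is $2$-defective with $\dim {\rm Sec}_2(\mathcal{G}_{n,3}) = 2M-1$, i.e.\ the defect at $k=2$ equals $2$. Next, the standard join estimate $\dim {\rm Sec}_{j+1}(X) \leq \dim {\rm Sec}_j(X) + \dim X + 1$, applied repeatedly from $j=2$ up to $k-1$, gives
$$ \dim {\rm Sec}_k(\mathcal{G}_{n,3}) \,\leq\, (2M-1) + (k-2)(M+1) \,=\, kM + k - 3, $$
which is exactly two below the expected dimension $kM+k-1$ appearing in (\ref{eq:secdim}). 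In other words, the defect of $2$ created at $k=2$ can never be recovered as $k$ grows.

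Then I would run a dimension count. A direct computation shows that the expected dimension satisfies $kM + k - 1 \leq N = \binom{n+3}{3}-1$ precisely when $n \geq 3k-3$, with equality exactly at $n = 3k-3$ (for instance $k=3$, $n=6$ gives $83 = 83$). Combining this with the join bound yields $\dim {\rm Sec}_k(\mathcal{G}_{n,3}) \leq kM+k-3 \leq N-2 < N$, so ${\rm Sec}_k(\mathcal{G}_{n,3})$ is a proper subvariety of $\PP^N$. Finally, since $\mathcal{G}_{n,3}$ is $2$-defective and ${\rm Sec}_k$ is proper, Proposition~\ref{defect} (with $k'=2$) immediately gives that $\mathcal{G}_{n,3}$ is $k$-defective for every $k \geq 2$ with $n \geq 3k-3$.

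The main obstacle is the boundary case $n = 3k-3$: there the expected dimension $kM+k-1$ equals $N$, so the naive chain (actual $\leq$ expected $\leq N$) leaves open whether ${\rm Sec}_k$ fills $\PP^N$, which is exactly the possibility Proposition~\ref{defect} does not permit us to assume away. This is why the additive join estimate carrying the defect of $2$ from the base case is essential: it is what pushes the bound from $N$ down to $N-2$ and secures properness at the boundary. For $n > 3k-3$ the count is already strict, so properness is immediate and no extra argument is needed.
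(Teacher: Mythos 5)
Your proof is correct and follows the same route as the paper, which simply declares the corollary ``immediate from Theorem~\ref{thm:mainsec2} and Proposition~\ref{defect}.'' Your additional step---using the join estimate $\dim {\rm Sec}_{j+1} \leq \dim {\rm Sec}_j + \dim X + 1$ to carry the defect of $2$ from the base case $k=2$ upward, so that $\dim {\rm Sec}_k(\mathcal{G}_{n,3}) \leq kM+k-3 \leq N-2$ and properness holds even at the boundary $n=3k-3$ where the expected dimension equals $N$---is a correct and worthwhile elaboration of the properness hypothesis that the paper leaves implicit.
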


\begin{proof}
This is immediate from Theorem \ref{thm:mainsec2} and Proposition \ref{defect}.
\end{proof}

\begin{table}[h]
\begin{center} \begin{tabular}{ | l | l | l | l | l | l | l | l | p{1.5cm} |} \hline  $n$ & $k$ & $d$ & par & $N$ & exp & $\dim$ & $\delta$ & par-$\dim$\\ 
\hline 5 & 3 & 3 & 62 & 55 & 55& 51 & 4 & 11 \\
\hline 6 & 3 & 3 & 83 & 83 & 83& 71 & 12 & 12 \\
\hline 6 & 4 & 3 & 111 & 83 & 83& 82 & 1 & 29 \\
\hline 7 & 3 & 3 & 107 & 119 & 107& 94 & 13 & 13 \\
\hline 7 & 4 & 3 & 143 & 119 & 119& 111 & 8 & 32 \\
\hline 8 & 3 & 3 & 134 & 164 & 134& 120 & 14 & 14 \\
\hline 8 & 4 & 3 &179 & 164 & 164& 144 & 20 & 35 \\
\hline 8 & 5 & 3 & 224 & 164 & 164 & 160 & 4 & 64 \\
\hline 9 & 3 & 3 & 164 & 219 & 164 & 149 & 15 & 15 \\
\hline 9 & 4 & 3 & 219 & 219 & 219 & 181 & 38 & 38\\
\hline 9 & 5 & 3 & 274 & 219 & 219 & 204 & 15 & 70 \\
\hline 10 & 3 & 3 & 197 & 285 & 197 & 181 & 16 & 16 \\
\hline 10 & 4 & 3 & 263 & 285 & 263 & 222 & 41 & 41 \\
\hline 10 & 5 & 3 & 329 & 285 & 285 & 253 & 32 & 76 \\
\hline 10 & 6 & 3 & 395 & 285 & 285 & 275 & 10 & 120 \\
\hline \end{tabular} 
\vspace{-0.11in}
\end{center}
  \caption{\label{tab:defective3}  Moment varieties 
  of order $d=3$ for mixtures of $k \geq 3$ Gaussians} \medskip
  \end{table}
  
Based on computations, like those in
Table~\ref{tab:defective3},
we propose the following conjecture.

\begin{conjecture} \label{conj:threedefective}
For any $n\geq 2$ and $k \geq 1$, we have
\begin{equation}
\label{eq:dimformula} \dim( {\rm Sec}_k(\mathcal{G}_{n,3}) ) 
\,\,=\,\, \frac{1}{6}k \left[ k^2 - 3(n+4)k + 3n(n+6) + 23 \right] - (n+2),
\end{equation}
for $k=1,2,\ldots,K$, where $K+1$ is the smallest integer such that the right hand side 
in {\rm (\ref{eq:dimformula})} is larger than the ambient dimension $\binom{n+3}{3}-1$.
\end{conjecture}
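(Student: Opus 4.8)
The plan is to translate the dimension count into a Hilbert-function computation via Terracini's Lemma. I would work in the local Artinian ring $R=\CC[t_1,\ldots,t_n]/(t_1,\ldots,t_n)^4$, whose dimension is $N+1=\binom{n+3}{3}$. Under the generating-function description in (\ref{eq:gaussian}), the point of $\mathcal{G}_{n,3}$ with parameters $(\mu,\Sigma)$ is the class in $R$ of the unit $G=\exp(t\cdot\mu+\tfrac12 t^{\top}\Sigma t)$; differentiating in the $\mu_i$ and $\sigma_{ij}$ shows that the affine tangent space there is $W=G\cdot V_{\le 2}$, where $V_{\le 2}\subset R$ denotes the polynomials of degree $\le 2$. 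Since $G$ is a unit, $\dim W=\binom{n+2}{2}$. By Terracini's Lemma the affine cone over ${\rm Sec}_k(\mathcal{G}_{n,3})$ has dimension $\dim\sum_{\ell=1}^k W_\ell$ at a general point, with $W_\ell=G_\ell V_{\le 2}$. The whole problem thus becomes the computation of $\dim\ker\Phi$ for the map $\Phi:\bigoplus_{\ell=1}^k V_{\le 2}\to R$, $(f_\ell)\mapsto\sum_\ell G_\ell f_\ell$, whose kernel dimension is exactly the defect.

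Next I would reduce this kernel to an ideal-theoretic object. Dividing by the unit $G_1$, a syzygy is the same as a tuple $(f_2,\ldots,f_k)$ for which the degree-$3$ part of $\sum_{\ell\ge 2}H_\ell f_\ell$ vanishes, where $H_\ell=G_\ell/G_1=\exp(\ell_\ell+q_\ell)$ with $\ell_\ell=(\mu_\ell-\mu_1)\cdot t$ and $q_\ell=\tfrac12 t^{\top}(\Sigma_\ell-\Sigma_1)t$; the coordinate $f_1$ is then forced. Writing $f_\ell=c_\ell+\lambda_\ell+\kappa_\ell$ and expanding to order three, the degree-$3$ component of $H_\ell f_\ell$ equals $\kappa_\ell\ell_\ell+\lambda_\ell(\tfrac12\ell_\ell^{2}+q_\ell)+c_\ell\,\ell_\ell(\tfrac16\ell_\ell^{2}+q_\ell)$. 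As $f_\ell$ varies this sweeps out precisely $\ell_\ell\cdot(\text{quadrics})+(\tfrac12\ell_\ell^2+q_\ell)\cdot(\text{linear forms})$, which equals the degree-$3$ piece of the complete intersection ideal $(\ell_\ell,q_\ell)$. Summing over $\ell$, the image of the relevant map is $J_3$, where $J=(\ell_2,\ldots,\ell_k,\,q_2,\ldots,q_k)$. A short bookkeeping using $\binom{n+2}{2}+\binom{n+2}{3}=\binom{n+3}{3}$ then collapses everything to the clean identity
$$
\mathrm{codim}_{\PP^N}\,{\rm Sec}_k(\mathcal{G}_{n,3})\;=\;\dim\bigl(\CC[t]/J\bigr)_3\;=\;H_{\CC[t]/J}(3).
$$

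It then remains to evaluate this Hilbert-function value. The forms $\ell_2,\ldots,\ell_k$ are the mean differences and $q_2,\ldots,q_k$ the covariance differences, and these two families are governed by disjoint sets of parameters. Assuming $k-1\le n$, modding out the $k-1$ general linear forms identifies $\CC[t]/J$ with $\CC[u_1,\ldots,u_{n-k+1}]$ modulo $k-1$ general quadrics, and $H(3)$ lands in the first nontrivial degree above quadric generators — exactly the case settled by the Hochster--Laksov theorem, which says generic forms attain the expected Fr\"oberg value. This gives
$$
H(3)\;=\;\max\Bigl(0,\ \binom{n-k+3}{3}-(k-1)(n-k+1)\Bigr).
$$
Subtracting this codimension from $N=\binom{n+3}{3}-1$ reproduces the right-hand side of (\ref{eq:dimformula}), and the cutoff $k\le K$ is precisely the range where the codimension stays positive, i.e.\ before Fr\"oberg truncation.

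The main obstacle is genericity. Hochster--Laksov applies to generic forms, whereas ours are constrained to the image of the Gaussian parameter space; the crux is to prove that general $(\mu_\ell,\Sigma_\ell)$ yield a general tuple of linear forms and quadrics, and that this survives restriction to the subspace cut out by the $\ell_\ell$. This is believable because means and covariances are independent parameters, and each $q_\ell$ already ranges over all quadrics as $\Sigma_\ell$ varies, but turning it into a rigorous statement is the real content. Once this genericity is secured, the argument upgrades the conjecture to a theorem for $d=3$; by contrast the analogues for $d\ge 4$ would require Fr\"oberg's conjecture in higher degree, which is exactly why those cases remain open.
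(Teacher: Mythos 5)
First, a point of orientation: the statement you are proving is stated in the paper as a \emph{conjecture}. The authors give no proof; they only observe that it holds for $k=1$ trivially and for $k=2$ by Theorem \ref{thm:mainsec2}, and they verify it by numerical Jacobian-rank computations for $n\leq 15$ (Table \ref{tab:defective3}). So there is no proof in the paper to compare yours against, and a completed version of your argument would go strictly beyond what the paper establishes. Your reduction itself checks out: the affine tangent cone at the point $G=\exp(t\cdot\mu+\tfrac12 t^{\top}\Sigma t)$ of $R=\CC[t_1,\dots,t_n]/(t)^4$ is indeed $G\cdot V_{\leq 2}$; the image of the degree-three-part map for the $\ell$-th summand is exactly $(\ell_\ell,q_\ell)_3$, since $\lambda_\ell\cdot\tfrac12\ell_\ell^2$ and $c_\ell\,\ell_\ell(\tfrac16\ell_\ell^2+q_\ell)$ are absorbed into $\ell_\ell S_2$; and the bookkeeping gives $\mathrm{codim}\,{\rm Sec}_k(\mathcal{G}_{n,3})=\binom{n+2}{3}-\dim J_3=\dim(S/J)_3$. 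I also verified that $N-\max\bigl(0,\binom{n-k+3}{3}-(k-1)(n-k+1)\bigr)$ agrees, as a polynomial identity in $n$ and $k$, with the right-hand side of (\ref{eq:dimformula}), and that it reproduces every row of Table \ref{tab:defective3}.

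The one issue you single out as the ``real content'' --- genericity --- is in fact the easiest part to close. The tuples $(\ell_2,\dots,\ell_k)$ and $(q_2,\dots,q_k)$ are the mean differences and covariance differences, which sweep out \emph{all} of $(S_1)^{k-1}\times (S_2)^{k-1}$ with no constraint and independently of one another; and the restriction map from $S_2$ to the quadrics on the subspace $\{\ell_2=\cdots=\ell_k=0\}$ is surjective, so generic quadrics restrict to generic quadrics in $n-k+1$ variables. Upper semicontinuity of $\dim(S/J)_3$ then transfers the Hochster--Laksov value (which applies because degree $3$ is the first nontrivial degree above the quadric generators) to general parameters. What does deserve explicit care is the range of $k$: your argument needs $k-1\leq n$ for the $\ell_\ell$ to be independent (when $k-1>n$ one gets $J_3=S_3$, so the secant variety fills, consistent with the intended cutoff --- note that the literal definition of $K$ in the conjecture misbehaves for small $n$, e.g.\ $n=2$, $k=4$, where the formula would predict a dimension below $N$ even though ${\rm Sec}_3(\mathcal{G}_{2,3})$ already fills $\PP^9$). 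With these points written out, your sketch is a genuine proof of the conjectured dimension formula in the meaningful range, via Terracini plus apolarity plus Hochster--Laksov, whereas the paper offers only computational evidence.
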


For $k=1$ this formula evaluates to ${\rm dim}(\mathcal{G}_{n,3}) = 
n(n+3)/2$, as desired. Conjecture \ref{conj:threedefective}
also holds for $k=2$. This is best seen by
rewriting the identity (\ref{eq:dimformula}) as follows:
$$  \frac{1}{2}kn(n+3)+k-1\, -\, {\rm dim}\bigl(
{\rm Sec}_k (\mathcal{G}_{n,3})\bigr) \,\, \, = \,\,\,
\frac{1}{2}(k-1)(k-2)n \,-\, \frac{1}{6}(k-1)(k^2-11k+6). $$
This is the difference between the expected dimension and the
true dimension of the $k$th secant variety. For $k=2$ this  equals $2$,
independently of $n$, in accordance with Theorem~\ref{thm:mainsec2}. 

Conjecture \ref{conj:threedefective} was verified computationally
for $n \leq 15$. Table \ref{tab:defective3} illustrates  all cases for
$n \leq 10$. Here,
${\rm exp} = {\rm min}({\rm par},N)$ is the {\em expected dimension}, and
$\delta = {\rm exp}-{\rm dim}$ is the {\em defect}.

We also undertook a comprehensive experimental study for
higher moments of multivariate Gaussians.
The  following two examples are the two smallest  defective cases for $d=4$.

\begin{example} \rm
Let $n=8$ and $d=4$. The Gaussian moment variety $\mathcal{G}_{8,4}$ is $11$-defective.
The expected dimension of ${\rm Sec}_{11} (\mathcal{G}_{8,4})$
equals the ambient dimension $N = 494$, but this secant
variety is actually a hypersurface in $\PP^{494}$. It would be very nice to know its degree.
\end{example}

\begin{example} \rm
Let $n=9$ and $d=4$. The moment variety $\mathcal{G}_{9,4}$ is $12$-defective
but it is not $11$-defective. Thus the situation is much more
complicated than that in Theorem \ref{thm:mainsec2},
where defectivity always starts at $k=2$. We do not yet have any theoretical
explanation for this.
\end{example}

Table \ref{tab:defective4} shows the first few defective cases 
for Gaussian moments of order $d=4$.
It suggests a clear pattern, resulting in the following conjecture.
We verified this for $n \leq 14$.

\begin{table}[h]
\begin{center} \begin{tabular}{ | l | l | l | l | l | l | l | l | p{1.5cm} |} \hline  $n$ & $k$ & $d$ & par & $N$ & exp & $\dim$ & $\delta$ & par-$\dim$\\ 
\hline 8 & 11 & 4 & 494 & 494 & 494 & 493 & 1 & 1 \\
\hline 9 & 12 & 4 & 659 & 714 & 659 & 658 & 1 & 1 \\
\hline 9 & 13 & 4 & 714 & 714 & 714 & 711 & 3 & 3 \\
\hline 10 & 13 & 4 & 857 & 1000 & 857 & 856 & 1 & 1 \\
\hline 10 & 14 & 4 & 923 & 1000 & 923 & 920 & 3 & 3 \\
\hline 10 & 15 & 4 & 989 & 1000 & 989 & 983 & 6 & 6 \\
\hline 11 & 14 & 4 & 1091 & 1364 & 1091 & 1090 & 1 & 1 \\
\hline 11 & 15 & 4 & 1169 & 1364 & 1169 & 1166 & 3 & 3 \\
\hline 11 & 16 & 4 & 1247 & 1364 & 1247 & 1241 & 6 & 6 \\
\hline 11 & 17 & 4 & 1325 & 1364 & 1325 & 1315 & 10 & 10 \\
\hline 12 & 15 & 4 & 1364 & 1819 & 1364 & 1363 & 1 & 1 \\
\hline 12 & 16 & 4 & 1455 & 1819 & 1455 & 1452 & 3 & 3 \\
\hline 12 & 17 & 4 & 1546 & 1819 & 1546 & 1540 & 6 & 6 \\
\hline 12 & 18 & 4 & 1637 & 1819 & 1637 & 1627 & 10 & 10 \\
\hline 12 & 19 & 4 & 1728 & 1819 & 1728 & 1713 & 15 & 15 \\
\hline 12 & 20 & 4 & 1819 & 1819 & 1819 & 1798 & 21 & 21 \\
\hline \end{tabular} 
\vspace{-0.11in}
\end{center}
  \caption{\label{tab:defective4} A census of defective Gaussian moment varieties $d=4$}
\end{table}

\begin{conjecture} \label{conj:eleven}
The Gaussian moment variety 
$\,\mathcal{G}_{n,4}\,$ is $(n+3)$-defective with defect $\delta_{n+3}=1$ for $n \geq 8$. 
Furthermore, for all $r \geq 3$, the $(n+r)$-defect  of $\,\mathcal{G}_{n,4}\,$
is equal to $\delta_{n+r}=\binom{r-1}{2}$, unless
the number of model parameters exceeds the ambient dimension $\binom{n+4}{4}-1$. \end{conjecture}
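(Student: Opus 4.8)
The plan is to prove Conjecture~\ref{conj:eleven} through Terracini's Lemma, after reformulating the secant dimension as an apolarity (fat-point) computation in the spirit of Alexander--Hirschowitz~\cite{AH}. Let $R_{\le 4}$ be the space of polynomials of degree $\le 4$ in $t_1,\ldots,t_n$, so that $\dim_{\CC} R_{\le 4}=\binom{n+4}{4}=N+1$. Reading off (\ref{eq:gaussian}) with a single component, the affine tangent space to the cone over $\mathcal{G}_{n,4}$ at the Gaussian with parameters $(\mu^{(i)},\Sigma^{(i)})$ is $T_i=(F_i\cdot P_2)\bmod(\deg>4)$, where $F_i=\exp\bigl(\sum_a t_a\mu^{(i)}_a+\tfrac12\sum_{a,b}\sigma^{(i)}_{ab}t_at_b\bigr)$ and $P_2=\langle 1,t_a,t_at_b\rangle$ is the space of polynomials of degree $\le 2$, of dimension $\binom{n+2}{2}=1+\tfrac12 n(n+3)$. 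By Terracini's Lemma, $\dim\,{\rm Sec}_{k}(\mathcal{G}_{n,4})+1=\dim_{\CC}\sum_{i=1}^{k}T_i$ for general parameters, so the $(n+r)$-defect equals $(n+r)\bigl(1+\tfrac12 n(n+3)\bigr)-\dim_{\CC}\sum_{i=1}^{n+r}T_i$, and I would study it via the annihilator $\bigl(\sum_i T_i\bigr)^{\perp}$ in $\CC[x_1,\ldots,x_n]_{\le 4}$. The conceptual point, which is exactly the Gaussian extension the paper advertises, is that the Veronese $v_4(\PP^n)$ is the $\{\Sigma=0\}$ locus of $\mathcal{G}_{n,4}$, whose tangent spaces are obtained by replacing $P_2$ with $P_1=\langle 1,t_a\rangle$; the covariance parameters fatten each point in the quadratic directions, so $\sum_i T_i$ is the ``Gaussian double point'' analogue of the schemes governing Alexander--Hirschowitz.

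The decisive structural clue, visible in Table~\ref{tab:defective4}, is that the conjectured defect $\binom{r-1}{2}$ is \emph{independent of} $n$. This strongly suggests that the excess tangency is localized to a bounded-dimensional sub-configuration and stabilizes as $n$ grows, which would reduce the problem to an induction on $r$ alone, with base case $r=3$ (defect $1$, the regime $n\ge 8$) supplied by the explicit rank computations behind the table. I would therefore first prove the lower bound $\delta_{n+r}\ge\binom{r-1}{2}$ by exhibiting that many independent apolar forms. Dualizing, an element of $\bigl(\sum_i T_i\bigr)^{\perp}$ is a constant-coefficient differential operator of order $\le 4$ annihilating every fattened tangent space $F_i\cdot P_2$. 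Following the template of Theorem~\ref{thm:mainsec2}, where restricting to the plane $A(p,p')$ and the $\bar\sigma$-trick revealed hidden coincidences between the coefficients of $s^2$ and $u$, and of $s^3$ and $3su$, I would specialize the $n+r$ Gaussians so that their means lie on a low-dimensional flat with correspondingly constrained covariances. Writing $\binom{r-1}{2}=\binom{(r-3)+2}{2}$ as the dimension of the space of ternary forms of degree $r-3$ hints that the annihilator should be spanned by a uniform (in $n$) family of quartics indexed by such forms, built from the two independent degree-$2$ apolarity conditions that each $d=3$-style collapse already contributes; verifying their independence and genuine annihilation is the content of the lower bound.

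The matching upper bound $\delta_{n+r}\le\binom{r-1}{2}$ is where I expect the real work to lie. By semicontinuity it suffices to produce one configuration of $n+r$ Gaussians realizing the claimed value of $\dim\sum_i T_i$, and the natural tool is a Gaussian version of the Horace differential method: specialize some components onto the locus $\{\mu_n=0,\ \sigma_{nj}=0\ \forall j\}$, whose moment variety is a copy of $\mathcal{G}_{n-1,4}$, peel off points, and run an induction together with the additivity supplied by Proposition~\ref{defect}. The main obstacle, relative to the classical Veronese setting, is the inhomogeneity of the Gaussian moment map: the mean enters linearly while the covariance effectively carries half the degree, so the clean bidegree bookkeeping that makes the Horace method run for $v_4(\PP^n)$ is unavailable. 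One must control how the degree-$4$ truncation of the exponential weight $F_i$ interacts with the two-step filtration $P_1\subset P_2$, and show that each specialization neither loses nor creates excess intersection beyond the predicted $\binom{r-1}{2}$. Pinning down the \emph{exact} value rather than mere bounds, and handling the boundary case where the parameter count meets $\binom{n+4}{4}-1$ and ${\rm Sec}_{n+r}(\mathcal{G}_{n,4})$ fills $\PP^N$, will require a dedicated Gaussian analogue of the Alexander--Hirschowitz specialization lemmas, and this is the step I anticipate being the crux of the argument.
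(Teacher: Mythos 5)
The statement you are attacking is a \emph{conjecture}: the paper gives no proof of Conjecture~\ref{conj:eleven}, only numerical evidence (Jacobian rank computations) for $n\le 14$, recorded in Table~\ref{tab:defective4}. So there is no paper argument to compare against, and the relevant question is whether your proposal constitutes a proof. It does not. Your Terracini/apolarity setup is sound — the affine tangent space to the cone over $\mathcal{G}_{n,4}$ at a general point is indeed $F_i\cdot P_2$ truncated in degree $4$, so the secant dimension is controlled by the span of these ``Gaussian fat points,'' and this is a reasonable frame in which to attempt the conjecture. But every substantive step of the plan is left open, and you say so yourself.

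Concretely: (i) for the lower bound $\delta_{n+r}\ge\binom{r-1}{2}$ you never exhibit a single apolar quartic, let alone an independent family of $\binom{r-1}{2}$ of them; the identity $\binom{r-1}{2}=\binom{(r-3)+2}{2}$ is suggestive numerology until an explicit annihilating operator, uniform in $n$, is written down and its independence verified (the $\bar\sigma$-trick from the proof of Theorem~\ref{thm:mainsec2} is a genuine lead here, but it produces the $d=3$ collapse and you do not show how it propagates to $d=4$ and to $r$ points). (ii) For the upper bound you explicitly defer the ``Gaussian Horace method'' as the crux; semicontinuity licenses a specialization, but no specialization realizing $\dim\sum_i T_i$ is produced, and the obstruction you correctly identify — the inhomogeneous weighting of mean versus covariance directions, which breaks the bidegree bookkeeping of the classical Horace method — is precisely the part that would need to be solved. (iii) Even your proposed base case is not available: the ``explicit rank computations behind the table'' cover only $n\le 14$, so the $r=3$ assertion for \emph{all} $n\ge 8$ is itself an instance of the conjecture and cannot serve as an induction base without a uniform-in-$n$ argument of the kind the paper supplies only for $d=3$. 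In short, the proposal is a plausible research program consistent with the paper's data, but it proves nothing beyond what the table already shows.
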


\section{Towards Equations and Degrees}

We begin Section 4 by reminding
 the reader that the Veronese variety
$\mathcal{V}_{n,d}$ is a subvariety of $\mathcal{G}_{n,d}$. It
 is obtained by setting the covariance matrix in the parametrization equal to zero.
The Gaussian moment variety can be thought of as a noisy version of the
Veronese variety. Indeed, points on $\mathcal{V}_{n,d}$ 
represent moments of order $\leq d$ of Dirac measures,
and points on its secant variety ${\rm Sec}_k(\mathcal{V}_{n,d})$ 
represent moments of  finitely supported signed measures on $\mathbb{R}^n$.

The celebrated Alexander-Hirschowitz Theorem \cite{AH} characterizes
defective Veronese varieties. It identifies all triples
$(n,d,k)$ such that a mixture of $k$ Dirac measures on $\mathbb{R}^n$ is not
algebraically identifiable from its moments of order $\leq d$. This section is a first step
towards a similar characterization for mixtures of $k$ Gaussian measures on $\mathbb{R}^n$.
The cases $d=3$ and $d=4$ for Gaussians, featured in Theorem~\ref{thm:mainsec2} and
Conjecture \ref{conj:eleven}, are reminiscent of the infinite family in the case $d=2$ for Veronese varieties.
At present we do not know any isolated defective examples that would be
analogous to the exceptional cases in the Alexander-Hirschowitz Theorem.

We wish to reiterate that the Gaussian moment varieties $\mathcal{G}_{n,d}$ are much more complicated than
the Veronese varieties $\mathcal{V}_{n,d}$. Beyond Proposition  \ref{prop:surface},
 their ideals are essentially unknown.
 
 In Remark \ref{rmk:familiar} we observed Veronese curves as subvarieties with fixed variance inside the Gaussian moment variety.  
 We record the following analogue for  higher-dimensional cases.
 
\begin{remark} \label{rmk:shaowei} \rm
Statisticians are often interested in Gaussian mixtures
where the entries $\sigma_{\ell i j}$ of the $k$ covariance matrices 
are fixed and the free parameters are the coordinates $\mu_{\ell i}$ of the
$k$ mean vectors. Such a model is a subvariety of  ${\rm Sec}_k(\mathcal{G}_{n,d})$
that is a {\em join of Veronese varieties}. Indeed, we see 
from  (\ref{eq:gaussian})  that any Gaussian moment variety with fixed covariance matrix
is isomorphic to the standard Veronese $\mathcal{V}_{n,d}$
after a linear change of coordinates in $\PP^N$,
and taking mixtures corresponds to taking joins.
In particular, if all $k$ covariance matrices are fixed and identical, then
the resulting moment variety is isomorphic to ${\rm Sec}_k(\mathcal{V}_{n,d})$
under a linear change of coordinates in $\PP^N$. Hence
the Alexander-Hirschowitz Theorem characterizes the
algebraic identifiability of Gaussian mixtures with fixed identical covariance matrix.
The varieties in this paper are new to geometers because the
covariance matrices are parameters.
\end{remark}

\smallskip

A well-known result in statistics states that, under reasonable hypotheses, probability 
distributions are determined by their moments. 
In addition, it is known (e.g.~from \cite{Yak}) that Gaussian mixtures are identifiable (in the statistical sense).
 Since their moments are polynomials in their parameters, Belkin and Sinha \cite{BS} concluded
  that (for $k$ and $n$ fixed) a finite set of moments is enough to recover the mixture model uniquely.
  In particular, the secant variety ${\rm Sec}_k( \mathcal{G}_{n,d})$ has
the expected dimension for $d \gg 0$ when $k$ and $n$ are fixed.
This raises the following question:

\begin{problem}
Let $D(k,n)$ be the smallest integer $d$ such that
the $k$-th mixtures of Gaussians on $\RR^n$
are algebraically identifiable from their moments
of order $\leq d$. Find good upper bounds on $D(k,n)$.
What are the best bounds that can be derived using algebraic geometry methods?
\end{problem}

For $n \geq 2$ it is difficult to compute the prime ideal of the
Gaussian moment variety $\mathcal{G}_{n,d}$ in $\PP^N$.  One approach
is to work on the affine open set $\mathbb{A}^{\!N} = \{m_{00\cdots0} = 1\}$.
On that affine space, $\mathcal{G}_{n,d}$ is a complete intersection
defined by the vanishing of all cumulants $k_{i_1 i_2 \cdots i_n}$
whose order $i_1+i_2+ \cdots +i_n$ is between $3$ and $d$;
see \cite[Remark 6]{CB}. Each such cumulant is a polynomial in the moments. 
Explicit formulas are obtained from the identity $K  = {\rm log}(M)$ of generating functions;
see \cite[eqn (8)]{CB}. The ideal of $\mathcal{G}_{n,d}$ is then obtained
from the ideal of cumulants by saturating with respect to $m_{00\cdots 0}$.
One example is featured in \cite[eqn (7)]{CB}.

\smallskip

We next exhibit an alternative representation of
$\mathcal{G}_{n,d} \cap \mathbb{A}^{\!N}$
as a determinantal variety. This is derived from 
Willink's recursion in \cite{Wil}. It generalizes the matrix $G_d$ in
Proposition~\ref{prop:surface}.
We define the {\em Willink matrix} $W_{n,d}$ as follows.
Its rows are indexed by vectors $u \in \mathbb{N}^n$
with $|u| \leq d-1$.
 The matrix $W_{d,n}$ has $2n+1$ columns. The first entry in the row $u$
is the corresponding moment $m_u$. The next $n$ entries in the row $u$
are  $\,m_{u+e_1}, \, m_{u+e_2} , \,\ldots ,\,m_{u+e_n}$.
The last $n$ entries in the row $u$ are 
$\,u_1 m_{u-e_1},\,u_2 m_{u-e_2},\,\ldots,\, u_n m_{u-e_n}$.
Thus the Willink matrix $W_{n,d}$ has
format $\binom{n+d-1}{d-1} \times (2n+1)$ and each entry is
a scalar multiple of one of the moments.
For $n = 1$, the $d \times 3$-matrix $W_{1,d}$ equals
the transpose of the matrix $G_d$ after permuting rows.

\begin{proposition}
The affine Gaussian moment variety $\mathcal{G}_{n,d} \cap \mathbb{A}^{\!N}$
is defined by the vanishing of the $(n+2) \times (n+2)$-minors of the
Willink matrix $W_{n,d}$.
\end{proposition}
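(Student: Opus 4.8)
The plan is to show that the $(n+2)\times(n+2)$-minors of the Willink matrix $W_{n,d}$ cut out exactly $\mathcal{G}_{n,d}\cap\mathbb{A}^{\!N}$ by exploiting the recursion that defines those minors. The starting point is the observation, attributed to Willink \cite{Wil}, that for a genuine Gaussian distribution the moments satisfy a first-order linear recursion in each coordinate direction. Concretely, differentiating the generating-function identity in (\ref{eq:gaussian}) with $k=1$ with respect to $t_j$ gives, for the single-Gaussian density, the relations
\begin{equation}
\label{eq:willinkrec}
m_{u+e_j}\,=\,\mu_j\,m_u\,+\,\sum_{i=1}^n \sigma_{ij}\,u_i\,m_{u-e_i},
\qquad j=1,\ldots,n,
\end{equation}
valid for every $u\in\mathbb{N}^n$. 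Reading (\ref{eq:willinkrec}) across a fixed row $u$, it says precisely that the row vector $(m_{u+e_1},\ldots,m_{u+e_n})$ is the fixed linear combination, with coefficients $\mu_j$ and $\sigma_{ij}$, of the entries $m_u$ and $u_i m_{u-e_i}$ sitting in the other columns of that same row. Equivalently, the $(2n+1)$-dimensional vector $(\,1,\,-\mu_1,\ldots,-\mu_n,\,-\sigma_{11},\ldots\,)$ — built from the mean and covariance parameters and suitably arranged against the column order $m_u,\,m_{u+e_j},\,u_i m_{u-e_i}$ — lies in the left kernel of $W_{n,d}$ up to sign conventions. Since this is one relation per column-block and it holds for all rows simultaneously, the whole matrix $W_{n,d}$ has rank at most $n+1$ on $\mathcal{G}_{n,d}\cap\mathbb{A}^{\!N}$, so all $(n+2)\times(n+2)$-minors vanish there. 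This gives the containment $\mathcal{G}_{n,d}\cap\mathbb{A}^{\!N}\subseteq V\!\bigl((n+2)\text{-minors of }W_{n,d}\bigr)$.

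For the reverse containment I would argue that the minor equations force the existence of parameters $\mu_j,\sigma_{ij}$ realizing the recursion, which then reconstructs a point of the affine Gaussian variety. Work on the chart $\{m_{00\cdots0}=1\}$. The rank-$(n+1)$ condition on $W_{n,d}$ means every row lies in the span of a fixed $(n+1)$-dimensional space of column combinations; in particular, taking the rows indexed by $u=0$ and $u=e_i$ (whose $m_{u-e_i}$ entries are controlled), the vanishing of the relevant $(n+2)\times(n+2)$-minors lets me solve uniquely for candidate values $\mu_j:=m_{e_j}$ and $\sigma_{ij}:=m_{e_i+e_j}-m_{e_i}m_{e_j}$, i.e. exactly the first moments and the second cumulants. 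The content of the minor vanishing for higher rows $u$ is then that the recursion (\ref{eq:willinkrec}) propagates: once $\mu,\sigma$ are pinned down from the low-order rows, every higher moment $m_{u+e_j}$ is the value prescribed by (\ref{eq:willinkrec}), which is precisely the value taken by the true Gaussian moment. Thus any point annihilating all the $(n+2)\times(n+2)$-minors has all its moments determined by $\mu,\sigma$ through the same polynomial formulas that define $\rho$ in (\ref{eq:gaussian}), so it lies on $\mathcal{G}_{n,d}\cap\mathbb{A}^{\!N}$.

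The step I expect to be the main obstacle is verifying that the minor equations genuinely encode the full recursion (\ref{eq:willinkrec}) and not merely a weaker consequence of it, i.e. that the determinantal ideal is not strictly larger (defining a bigger variety) than the affine Gaussian moment variety. The subtlety is that the rank condition ``$\mathrm{rank}\,W_{n,d}\le n+1$'' a priori only asserts the existence of some left-kernel vector, whereas I need the existence of a single kernel vector whose entries are consistent across all rows and have the structured form $(1,-\mu,-\sigma)$ with a symmetric $\sigma$. I would resolve this by checking that the low-order rows of $W_{n,d}$ are generically of full rank $n+1$, so the left kernel is one-dimensional and the kernel vector is forced to be a scalar multiple of the structured Willink vector; the symmetry $\sigma_{ij}=\sigma_{ji}$ then comes for free from comparing the relations coming from two different columns against the same row, or is absorbed into the column indexing. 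Combined with the dimension count — $\mathcal{G}_{n,d}\cap\mathbb{A}^{\!N}$ is irreducible of dimension $n(n+3)/2$ by the remark that it is an affine space, and the minor equations cannot cut out anything smaller since they already vanish on it — this pins the determinantal variety to be exactly $\mathcal{G}_{n,d}\cap\mathbb{A}^{\!N}$. The $n=1$ case, where $W_{1,d}$ is the transpose of $G_d$ and the result reduces to Proposition~\ref{prop:surface}, serves as a consistency check on the indexing conventions throughout.
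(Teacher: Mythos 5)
Your proposal is correct and follows essentially the same route as the paper's own proof: the forward inclusion comes from the $n$ structured vectors built from $(\mu,\sigma)$ lying in the null space of $W_{n,d}$ (these are column relations $W_{n,d}v=0$ rather than left-kernel vectors, and there are $n$ of them rather than one packaged $(2n+1)$-vector), which forces ${\rm rank}\,W_{n,d}\leq n+1$ and hence the vanishing of all $(n+2)\times(n+2)$-minors. For the converse the paper uses exactly the nondegeneracy point you flag as the main obstacle, namely that the square block of $W_{n,d}$ on the rows $u=0,e_1,\ldots,e_n$ and the columns $m_u,\,u_1m_{u-e_1},\ldots,u_nm_{u-e_n}$ is unitriangular with determinant $m_{00\cdots0}^{n+1}=1$, so the rank is exactly $n+1$, the kernel of the full matrix coincides with the $n$-dimensional (not one-dimensional) kernel of this top block and is spanned by vectors of the form $(\mu_i,0,\ldots,-1,\ldots,0,\sigma_{1i},\ldots,\sigma_{ni})^T$, and Willink's recursion then propagates to all higher moments --- so apart from these bookkeeping slips your argument is the paper's.
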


\begin{proof}
Suppose that the matrix $W_{n,d}$ is filled with the moments of
a Gaussian distribution on $\RR^n$, and consider the 
$n$ linearly independent vectors
\begin{equation}
\label{eq:basis}
  \qquad \bigl(\,\mu_i, \,0,\ldots,0,-1,0,\ldots,0, \, \sigma_{1i}, \sigma_{2i},\ldots, \sigma_{ni}\,\bigr)^T
\qquad \qquad \hbox{for}\, \,i=1,2,\ldots,n. 
\end{equation}
Here the entry $-1$ appears in the $(i+1)$st coordinate. By \cite[eqn (13)]{Wil},
these $n$ vectors are in the kernel of $W_{n,d}$. Hence the rank of $W_{n,d}$ 
is $\leq n+1$, and the $(n+2)$-minors are zero.

Conversely, let $m$ be an arbitrary point in $\mathbb{A}^{\! N}$
for which the matrix $W_{n,d}$ has rank $\leq n+1$.
The square submatrix indexed by the rows $1,2,\ldots,n+1$
and the columns $1,n+2,\ldots,2n+1$ has determinant equal to
$m_{00\cdots 0}^{n+1} = 1$. Hence the rank of $W_{n,d}$ is exactly $n+1$.
The kernel of the submatrix given  by the first
$n+1$ rows is an $n$-dimensional space 
for which we can pick a basis of the form (\ref{eq:basis}).
The entries can be interpreted as the
mean and the covariance matrix of a Gaussian distribution.
The rank hypothesis on $W_{n,d}$ now ensures that the $n$
vectors in (\ref{eq:basis}) are in the kernel of the full matrix $W_{n,d}$.
This means that the higher moments satisfy the recurrences in
\cite[eqn (13)]{Wil}, and hence the chosen point $m$ lies in 
$\mathcal{G}_{n,d}$.
\end{proof}

\begin{example} \rm
Consider the moments of order at most four for a bivariate Gaussian.
The variety $\mathcal{G}_{2,4}$ has dimension $5$
and degree $102$ in $\PP^{14}$.
Its Willink matrix has format $10 \times 5$:
$$ W_{2,4} \,\, = \,\, \begin{pmatrix}
\, m_{00} & m_{01} & \, m_{10} &    0 &     0 \\
\, m_{01} & m_{02} & \, m_{11} &    0 &   m_{00} \\
\, m_{10} & m_{11} & \, m_{20} &  m_{00} &     0 \\
\, m_{02} & m_{03} & \, m_{12} &  0 & 2 m_{01} \\
\, m_{11} & m_{12} & \, m_{21} &  m_{01} &   m_{10} \\
\, m_{20} & m_{21} & \, m_{30} & 2 m_{10} &     0 \\
\, m_{03} & m_{04} & \, m_{13} &    0 & 3 m_{02} \\
\, m_{12} & m_{13} & \, m_{22} &  m_{02} & 2 m_{11} \\
\, m_{21} & m_{22} & \, m_{31} & 2 m_{11} &   m_{20} \\
\, m_{30} & m_{31} & \, m_{40} & 3 m_{20} &     0
\end{pmatrix}
$$
The ideal of $4 \times 4$-minors of $W_{2,4}$ is minimally generated
by $657$ quartics. Saturation with respect to the coordinate $m_{00}$ yields the
prime ideal of $\mathcal{G}_{2,4}$, as described in \cite[Proposition~7]{CB}.
\end{example}

One would expect that it is even more difficult to describe the prime ideals of 
the secant varieties ${\rm Sec}_k( \mathcal{G}_{n,d})$ for $n \geq 2$, $k>1$. Actually, it is
already an open problem to find these ideals when $n=1$, $k=2$ and $d \geq 8$. We found in \cite[Theorem 1]{CB} that
 ${\rm Sec}_2( \mathcal{G}_{1,6})$ is a hypersurface of degree $39$ in $\PP^6$. Its defining polynomial is
the  sum of  $   31154  $ monomials.

\begin{example} \label{s2G17} \rm Let $n=1,\, k=2$ and $d=7$. The following results
 were obtained using methods from numerical algebraic geometry.
The $5$-dimensional variety ${\rm Sec}_2( \mathcal{G}_{1,7})$  has degree  $105$ in $\PP^7$. The eight coordinate projections,
defined algebraically by eliminating each one of $m_{07}, m_{16}, \ldots, m_{70}$ from the ideal of ${\rm Sec}_2( \mathcal{G}_{1,7})$,
are hypersurfaces in $\PP^6$. Their degrees are $\,85,99,104,95,78,66,48$ and $39 $ respectively.
 This suggests that there are no low degree generators in the ideal of ${\rm Sec}_2( \mathcal{G}_{1,7})$.
In fact, a state-of-the-art Gr\"obner basis computation by Jean-Charles Faug\`ere shows that the smallest degree
of such a  minimal generator is $25$.
\end{example}

With ideal generators  out of reach, we first ask for the degrees of our secant varieties.

\begin{conjecture}  \label{conj:bold}
For fixed $k$ and $n$, the function  $d \mapsto {\rm deg}\, {\rm Sec}_k(\mathcal{G}_{n,d})$ is a polynomial in $d$,
starting from the smallest value of $d$ where the secant variety does not fill the ambient space.
\end{conjecture}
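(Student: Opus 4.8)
The plan is to express $\deg {\rm Sec}_k(\mathcal{G}_{n,d})$ as an intersection number on a fixed compactification of the parameter space and to read off its polynomial dependence on $d$ from the graded nature of the moments. Set $P=\tfrac{1}{2}kn(n+3)+k-1$ and let $U$ be the affine parameter variety with coordinates $(\mu_{\ell i},\sigma_{\ell ij},\lambda_\ell)$; as a variety $U$ is independent of $d$, and the moment map $\Phi_d\colon U \dashrightarrow \PP^{N_d}$, $N_d=\binom{n+d}{d}-1$, has coordinates $m_u=\sum_\ell \lambda_\ell M_u(\mu_\ell,\sigma_\ell)$. The first step is stabilization of the parametrization degree: for $d'>d$ the forgetful projection $\PP^{N_{d'}}\dashrightarrow\PP^{N_d}$ intertwines $\Phi_{d'}$ and $\Phi_d$, so the generic fibers of the $\Phi_d$ form a nested decreasing family of finite sets once $d$ is large enough for algebraic identifiability (which holds by \cite{BS}). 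Hence there is a $d_1$ and an integer $e$ with $\deg\Phi_d=e$ for all $d\geq d_1$, and then $e\cdot\deg{\rm Sec}_k(\mathcal{G}_{n,d})$ is the number of solutions in $U$ of $P$ generic affine-linear equations in the $m_u$. It is this count that I would show is a polynomial in $d$.

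Second, I would compactify $U$ to a fixed projective variety $\bar U$, chosen once and for all, by adjoining a constant (``mass'') coordinate and homogenizing. The crucial structural fact is that each single-Gaussian moment $M_u$ is weighted-homogeneous of weighted degree $|u|$ under $\deg\mu=1,\ \deg\sigma^2=2$; homogenizing to a common degree $d$ turns the span $V_d=\langle m_u:|u|\leq d\rangle$ into a linear subsystem of a complete system $|\mathcal{L}_d|$ whose class in the Picard group of $\bar U$ is affine-linear in $d$, say $\mathcal{L}_d=dA+B$ for fixed classes $A,B$. This is precisely the mechanism behind Lemma~\ref{lem:eleven}, where the hyperplane class of $\mathcal{G}_{1,d}$ pulls back to $H_d=dL-\cdots$ on the blow-up $S_d$. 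Letting $Z_d\subset\bar U$ be the base scheme of $V_d$ --- the locus at infinity along which the Gaussians degenerate and all moments of order $\leq d$ vanish --- the degree formula for a rational map defined by a linear system \cite[Ch.~4]{Ful} gives, schematically,
\begin{equation*}
e\cdot\deg{\rm Sec}_k(\mathcal{G}_{n,d}) \;=\; \int_{\bar U} c_1(\mathcal{L}_d)^P \;-\; \bigl(\text{contribution of } Z_d \text{ via } s(Z_d,\bar U)\bigr).
\end{equation*}
Since $\mathcal{L}_d=dA+B$, the leading term $\int_{\bar U}c_1(\mathcal{L}_d)^P=\sum_{j=0}^P\binom{P}{j}d^j\,(A^j\!\cdot\! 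B^{P-j})$ is already a polynomial in $d$ of degree $P$ with fixed intersection numbers on $\bar U$ as coefficients.

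The main obstacle, and the technical heart of the argument, is to prove that the base-locus correction is itself polynomial in $d$. This amounts to understanding how the scheme $Z_d$ and its Segre class $s(Z_d,\bar U)$ vary with $d$: one must resolve the indeterminacy of $\Phi_d$ along the degeneration locus uniformly in $d$ and verify that the resulting Segre numbers grow polynomially. The $n=1,\,k=1$ case is exactly Section~2, where $Z_d$ has length $\binom{d+1}{2}$ and the correction is linear in $d$, so that $\deg\mathcal{G}_{1,d}=\binom{d}{2}$; the content of the conjecture is to carry out the analogous local analysis for $k$-fold joins and arbitrary $n$. I expect the key input to be a presentation of the ideal of $Z_d$ as a graded family in $d$ --- in the spirit of Willink's recursion and the Hilbert--Burch matrix (\ref{eq:hilbertburch}) --- whose Hilbert-type invariants, and hence whose Segre contribution, are eventually polynomial in $d$.

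As an independent check one can set the count up torically via the Bernstein--Khovanskii--Kushnirenko theorem: the number of torus solutions of the $P$ generic equations equals the normalized volume of the Newton polytope of a generic moment combination, which expands with $d$ in a controlled fashion, suggesting an eventually polynomial count. The same boundary-escape phenomenon then reappears as the need to account for solutions lost at the toric boundary (again the covariance-degeneration locus) and for the factor $e$; reconciling this count with the intersection number on $\bar U$ is what would complete the argument. I emphasize that the construction yields polynomiality for all $d$ beyond the stabilization threshold $d_1$, and that matching the exact starting point asserted in the conjecture --- the first $d$ for which ${\rm Sec}_k(\mathcal{G}_{n,d})$ is a proper subvariety --- requires the additional verification that $e$ is already constant there.
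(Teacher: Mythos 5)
This statement is Conjecture~\ref{conj:bold}, which the paper does not prove: the authors support it only with numerical degree computations (via \texttt{NumericalImplicitization.m2}) and use it as a \emph{hypothesis} in Proposition~\ref{conj:7432}, where the unconditional input is the double point formula for a \emph{general} Hilbert--Burch surface $X_d$ and the Le Barz trisecant formulas. So there is no ``paper proof'' to match your argument against; the question is whether your proposal closes the conjecture on its own. It does not. Your strategy --- write $e\cdot\deg{\rm Sec}_k(\mathcal{G}_{n,d})$ as $\int_{\bar U}c_1(\mathcal{L}_d)^P$ minus a base-locus correction on a fixed compactification $\bar U$, with $\mathcal{L}_d=dA+B$ --- is the natural one and is consistent in spirit with how the authors reason, but the two steps that carry all the content are asserted rather than established. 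First, no fixed $\bar U$ with $\mathcal{L}_d$ affine-linear in $d$ is exhibited; in the one case worked out in the paper ($n=1$, $k=1$, Section~2) the resolution $S_d$ itself depends on $d$ (the number $s$ of extra blow-ups and the multiplicities $c_i$ in Lemma~\ref{lem:eleven} vary with $d$), so the existence of a single $d$-independent compactification on which all $\Phi_d$ resolve uniformly is itself a nontrivial claim. Second, you explicitly defer the polynomiality of the Segre-class contribution of the $d$-varying base scheme $Z_d$, and that correction is exactly where the discrepancy between the generic formula (\ref{eq:degs2X}) and the actual degrees (\ref{eq:degs2G}) lives; proving it is essentially equivalent to the conjecture. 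The BKK cross-check has the same boundary-escape problem and adds no new leverage.

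Two further concrete issues. (i) Your count uses $P=\tfrac12 kn(n+3)+k-1$ generic hyperplanes, which presupposes $\dim{\rm Sec}_k(\mathcal{G}_{n,d})=P$; but the conjecture starts ``from the smallest $d$ where the secant variety does not fill the ambient space,'' and Theorem~\ref{thm:mainsec2}, Table~\ref{tab:defective4} and Conjecture~\ref{conj:eleven} show that this range can include defective values of $d$ (e.g.\ $\mathcal{G}_{8,4}$ with $k=11$), where the generic fiber of $\Phi_d$ is positive-dimensional and $e$ is not a finite degree. Your framework would at best give eventual polynomiality for $d\geq d_1$, which is strictly weaker than the stated claim. (ii) The stabilization argument for $e$ is fine as far as it goes (fibers are nested under the forgetful projections, so the generic fiber cardinality is eventually constant), but you acknowledge you cannot pin $d_1$ down to the threshold the conjecture names. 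In short: this is a reasonable research program that reduces the conjecture to a uniform resolution-of-indeterminacy statement and a polynomial Segre-class computation, neither of which is carried out; it should be presented as a strategy, not a proof.
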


The  numerical {\tt Macaulay2} \cite {MAC2}  package \texttt{NumericalImplicitization.m2},
developed by Chen and Kileel \cite{JJ}, was very useful for us.
It was able to compute the desired degrees in some interesting cases. These data points led us to
Conjecture~\ref{conj:bold} and  to the following result.

\begin{proposition} \label{conj:7432}
Suppose that Conjecture~\ref{conj:bold} holds for $k=2$ and $ n=1$.
Then, for all $d \geq 6$, the degree of the $d$th moment variety for mixtures of two univariate Gaussians equals
\begin{equation} \label{eq:degs2G}
\deg \,{\rm Sec}_2( \mathcal{G}_{1,d})\,\, =\,\, \frac{(d+7)(d-4)(d-3)(d-2)}{8}.
\end{equation}
\end{proposition}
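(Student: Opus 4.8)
The plan is to leverage the assumed polynomiality, reduce the statement to a finite interpolation, and supply the one structural ingredient that interpolation cannot give by itself: an a priori bound on the degree of the polynomial. Write $P(d) = \deg {\rm Sec}_2(\mathcal{G}_{1,d})$. Since $\dim {\rm Sec}_2(\mathcal{G}_{1,d}) = \min(d,5)$ by the (already established) first part of Theorem~\ref{thm:main}, the secant variety is a proper $5$-dimensional subvariety of $\PP^d$ exactly for $d \ge 6$, which is the range in which Conjecture~\ref{conj:bold} (for $k=2$, $n=1$) asserts that $P$ agrees with a polynomial. The right-hand side of (\ref{eq:degs2G}), which I denote $Q(d)$, is a polynomial of degree $4$. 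Two polynomials of degree at most $4$ that agree at five points are identical, so it suffices to prove $\deg_d P \le 4$ and to verify $P(d) = Q(d)$ for five values of $d$.

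For the degree bound I would show $P(d) = O(d^4)$ and then invoke polynomiality. The cleanest route is a B\'ezout-type bound for the join: since ${\rm Sec}_2(\mathcal{G}_{1,d})$ is the join of $\mathcal{G}_{1,d}$ with itself, its degree is at most $(\deg \mathcal{G}_{1,d})^2 = \binom{d}{2}^2 = O(d^4)$, where $\deg \mathcal{G}_{1,d} = \binom{d}{2}$ by Proposition~\ref{prop:surface}. Together with the hypothesis that $P$ is polynomial, this forces $\deg_d P \le 4$. As a consistency check on the leading coefficient one can run the double-point formula \cite[Theorem 9.3]{Ful} (already used above for the entry locus) on the smooth model $S_d$ of Section~2: the number of apparent double points is a universal expression in $H_d^2,\ H_d\cdot K_{S_d},\ K_{S_d}^2$ and $\chi(\mathcal{O}_{S_d})$, and from the class of $H_d$ in Lemma~\ref{lem:eleven} and the pairing (\ref{eq:pairing}) one reads off that $H_d^2 = \binom{d}{2}$ is quadratic in $d$ while $H_d\cdot K_{S_d}$ and $K_{S_d}^2$ are linear in $d$ and $\chi(\mathcal{O}_{S_d}) = 1$; the leading term $\tfrac{1}{2}\binom{d}{2}^2 \sim \tfrac{1}{8}d^4$ matches that of $Q$.

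It then remains to match $P$ and $Q$ at five arguments. I would use $P(6) = 39$ from \cite[Theorem 1]{CB}, $P(7) = 105$ from Example~\ref{s2G17}, and the values $P(8), P(9), P(10)$ computed with \texttt{NumericalImplicitization.m2}; evaluating $Q$ gives $39, 105, 225, 420, 714$ at $d = 6,\ldots,10$, and one checks these against the computed degrees. Since $P$ and $Q$ then agree at five points and both have degree at most $4$, they coincide, which is exactly (\ref{eq:degs2G}).

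The main obstacle is precisely the degree bound, i.e.\ the step that turns ``$P$ is a polynomial'' into ``$\deg_d P \le 4$.'' The join inequality $\deg {\rm Sec}_2(X) \le (\deg X)^2$ must be justified in the form needed, as the degree of a join is sensitive to secant defectivity and to the codimension $d-5$. The intersection-theoretic route, by contrast, only pins down the leading coefficient, since the apparent-double-point number produced by the double-point formula exceeds $\deg {\rm Sec}_2(\mathcal{G}_{1,d})$ in general; making it into an exact computation of $P$ would require the full enumerative formula for the degree of the secant variety of a surface, together with control of all invariants of $S_d$ as functions of $d$ even though $S_d$ is built from a $d$-dependent number of blow-ups with a priori unknown multiplicities $c_i$. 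Here the identity $H_d^2 = \binom{d}{2}$ is the key tool, as it forces $\sum_i c_i^2$, and hence the number of extra blow-ups, to grow only linearly in $d$. A secondary, practical obstacle is that three of the five interpolation values are obtained numerically and would ideally be certified.
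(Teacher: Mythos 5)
Your proposal is correct, and its skeleton coincides with the paper's: assume polynomiality from Conjecture~\ref{conj:bold}, bound the degree of that polynomial by $4$, and interpolate through the five values $39,105,225,420,714$ at $d=6,\ldots,10$ (the first from \cite[Theorem 1]{CB}, the rest numerical). Where you genuinely diverge is in how the bound $\deg_d P\le 4$ is obtained. The paper replaces $\mathcal{G}_{1,d}$ by a surface $X_d$ cut out by a \emph{generic} Hilbert--Burch matrix as in (\ref{eq:hilbertburch}), applies the double point formula for a projection to $\PP^4$ to get the exact quartic (\ref{eq:degs2X}), and then asserts that the singular line of Lemma~\ref{lem:Gsing} only lowers the degree, so that $P(d)$ is dominated by a quartic. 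Your route --- $\deg {\rm Sec}_2(X)\le(\deg X)^2=\binom{d}{2}^2$ via the ruled join and the fact that a linear projection does not increase degree, with $\deg\mathcal{G}_{1,d}=\binom{d}{2}$ from Proposition~\ref{prop:surface} --- is more elementary and arguably easier to make airtight than the paper's unproved ``correction term'' claim; the join inequality you flag as the main obstacle is in fact a standard consequence of the refined B\'ezout theorem for joins and holds irrespective of defectivity or of the codimension $d-5$, so that worry can be discharged by a citation. What the paper's computation buys in exchange is an exact quartic upper bound with the correct leading term (your own consistency check via $H_d^2=\binom{d}{2}$ and the double point formula on $S_d$ plays the same role, and you are right that turning it into an exact evaluation of $P$ would require control of all the blow-up data of $S_d$); but for the proposition only an $O(d^4)$ bound is needed, and either argument supplies it. Both proofs remain conditional on Conjecture~\ref{conj:bold} and on the uncertified numerical degrees at $d=8,9,10$.
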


\begin{proof}
Let $X_d$ be a general variety defined by
a Hilbert-Burch matrix $B_d$ as in (\ref{eq:hilbertburch}). Here `general' means that the entries in $B_d$ 
are generic linear forms in $x,y,z$. 
Using the double point formula in intersection theory \cite[Sec.~9.3]{Ful} for a general projection $X_d\to \PP^4$, we~compute
\begin{equation} \label{eq:degs2X}
\deg \, {\rm Sec}_2(X_d) \,\, =\,\, \frac{(d-4)(d-3)(d^2+5d-2)}{8}. 
\end{equation}  
Since $\mathcal{G}_{1,d}$ is singular, the degrees of its secant varieties are
 lower than (\ref{eq:degs2X}), with a correction term accounting for the singular line in Lemma \ref{lem:Gsing}. 
 The assumption that Conjecture~\ref{conj:bold} holds in our case  implies
 that   $d \mapsto {\rm Sec}_2( \mathcal{G}_{1,d})$ is a polynomial function of degree at most~$4$.
 Our numerical computation shows that the degrees of ${\rm Sec}_2( \mathcal{G}_{1,d})$ for $d=6,\ldots,10$   are 
 $ 39$, $105$, $225$, $420$ and $714$. These are enough to interpolate, and we obtain the polynomial in (\ref{eq:degs2G}).
 \end{proof}
 
 \begin{remark} \rm
The zeroes of \eqref{eq:degs2G} at $d=2,3,4$ were not part of the interpolation but they are not unexpected. Also, substituting $d=5$ into \eqref{eq:degs2G} recovers the famous degree $9$ that was found by Pearson in 1894 for
identifying mixtures of two univariate Gaussians \cite[Sec.~3]{CB}.
Using \texttt{NumericalImplicitization.m2}, we verified the correctness of \eqref{eq:degs2G} up to $d=11$. 
\end{remark} 

Following this train of thought, and using the Le Barz classification formulas in \cite{Barz}, we compute an analogous formula 
to (\ref{eq:degs2X}) for trisecants ($k=3$) of a general surface $X_d$:
$$\deg({\rm Sec}_3(X_d))\,\,=\,\, \frac{(d-6)(d^5+3d^4-57d^3-43d^2+752d-512)}{48}. $$
 Conjecture \ref{conj:bold} now suggests that
$\,d \mapsto \deg \,{\rm Sec}_3( \mathcal{G}_{1,d})\,$ is a polynomial function of degree~$6$.
Unfortunately, we do not yet have numerical evidence for this. 
For instance, we do not even know the degree of $\,{\rm Sec}_3(\mathcal{G}_{1,9})$.
The formula yields the upper bound $\,\deg({\rm Sec}_3(X_9))=2497$.

\smallskip

We close with two more cases with $n \geq 2$ for which we were able to compute the degrees.

\begin{example} \rm
Let $n = 2 $ and $d=4$.
The $5$-dimensional  moment variety $\mathcal{G}_{2,4}$
has degree $102 $ in $\mathbb{P}^{14}$.
It is not defective. Its secant variety
${\rm Sec}_2(\mathcal{G}_{2,4})$ has dimension $11$
and degree $538$.
\end{example}

\begin{example} \rm 
We return to Example~\ref{s2G33}, so $n=d=3$.
The Gaussian moment variety $\mathcal{G}_{3,3}$ has dimension $9$ and degree $130$ in $\mathbb{P}^{19}$.
The number $130$ was reported in \cite[Sec. 2]{CB}.  This variety is $2$-defective.
Its secant variety ${\rm Sec}_2(\mathcal{G}_{3,3})$ has dimension $17$ and degree $79$.
We do not know its ideal generators.
As in Example \ref{s2G17}, we studied the degrees of its coordinate projections. The $20$ coordinates on $\mathbb{P}^{19}$
come in seven symmetry classes. Representatives are $m_{000},m_{100},m_{200},m_{110},m_{300},m_{201},m_{111}$.
By omitting these coordinates, one at a time,
 we obtain hypersurfaces in $\mathbb{P}^{18}$ whose degrees are  $58,63,34,42,25,34$ and $40$ respectively.
\end{example}

\medskip
\bigskip

\noindent
{\bf Acknowledgements.}
We thank the Max-Planck-Institute for Mathematics in the Sciences,
Leipzig, for its hospitality during our work on this project.
Carlos Am\'endola and Bernd Sturmfels were supported by 
the Einstein Foundation Berlin.
Bernd Sturmfels also acknowledges funding from
the US National Science Foundation (DMS-1419018).
Kristian Ranestad acknowledges funding from
the Research Council of Norway (RNC grant 239015).

\bigskip

\begin{small}

\end{small}

\bigskip \medskip

\noindent
\footnotesize {\bf Authors' addresses:}

\smallskip

\noindent Carlos Am\'endola,
Technical University Berlin, Germany,
{\tt amendola@math.tu-berlin.de}

\noindent Kristin Ranestad,
University of Oslo, Norway,
{\tt ranestad@math.uio.no}

\noindent Bernd Sturmfels, 
University of California, Berkeley, USA,
{\tt bernd@berkeley.edu} \\ \phantom{Bernd Sturmfels,}
and \ MPI-MiS Leipzig, Germany, {\tt bernd@mis.mpg.de}

\end{document}